\newtheorem{theorem}{Theorem}[section]
\newtheorem{lemma}[theorem]{Lemma}
\newtheorem{claim}[theorem]{Claim}
\newtheorem{thm}[theorem]{Theorem}
\newtheorem{subclaim}[theorem]{Subclaim}
\newcommand{\msc}[1]{\begin{center}MSC2000: #1.\end{center}}
\numberwithin{equation}{section}
\def\eps{\varepsilon}
\def\COMMENT#1{}
\title{Tilings in randomly perturbed dense graphs} 
\author{J\'ozsef Balogh\footnote{Department of Mathematical Sciences,
 University of Illinois at Urbana-Champaign, Urbana, Illinois 61801, USA {\tt
jobal@math.uiuc.edu}.  Research is partially supported by NSF Grant DMS-1500121 and Arnold O. Beckman Research Award (UIUC Campus Research Board 15006).}
, Andrew Treglown\footnote{University of Birmingham, United Kingdom, {\tt a.c.treglown@bham.ac.uk}. Research supported by EPSRC grant EP/M016641/1.}
\ and Adam Zsolt Wagner\footnote{University of Illinois at Urbana-Champaign, Urbana, Illinois 61801, USA, {\tt
zawagne2@illinois.edu}. }}
\begin{document}
 \maketitle

\begin{abstract}
A perfect $H$-tiling in a graph $G$ is a collection of vertex-disjoint copies of a graph $H$ in $G$ that together cover all the vertices in $G$. In this paper we investigate perfect $H$-tilings in a random graph model
introduced by Bohman, Frieze and Martin~\cite{bfm1} in which one starts with a dense graph and then adds $m$ random edges to it.
Specifically, for \emph{any} fixed graph $H$, we determine the number of random edges required to add to an arbitrary graph of linear minimum degree in order to ensure the resulting graph contains a perfect $H$-tiling with high probability. Our proof utilises Szemer\'edi's Regularity lemma~\cite{szemeredi} as well as a special case of a result of Koml\'os~\cite{komlos} concerning almost perfect $H$-tilings in dense graphs.
\end{abstract}
 \msc{5C35, 5C70, 5C80}
\section{Introduction}
Embedding problems form a central part of both extremal and random graph theory. Indeed, many results in extremal graph theory concern minimum degree conditions that force some spanning substructure. For example,
a foundation stone in the subject is Dirac's theorem~\cite{dirac} from 1952 which states that every graph $G$ on $n \geq 3$ vertices and with minimum degree $\delta (G)\geq n/2$ is Hamiltonian. 
More recently, an important  paper of B\"ottcher, Schacht and Taraz~\cite{bot} resolved the Bollob\'as--Koml\'os conjecture; specifically, it provided a minimum degree condition which ensures a graph contains
every $r$-partite spanning subgraph of bounded degree and small bandwidth.

Recall that the Erd\H{o}s--R\'enyi random graph $G_{n,p}$ consists of vertex set $[n]:=\{1,\dots,n\}$ where each edge is present with probability $p$, independently of all other choices.
In this setting, a key question is to establish the \emph{threshold} at which $G_{n,p}$, with high probability, contains some spanning subgraph.
In the case of Hamilton cycles, P\'osa~\cite{posa1} showed that if $p\gg \log n /n$ then  asymptotically almost surely (a.a.s.) $G_{n,p}$ is Hamiltonian whilst  if $p\ll \log n /n$
a.a.s.~$G_{n,p}$ is not Hamiltonian. In general though few thresholds for embedding a fixed spanning subgraph $H$ in $G_{n,p}$ are known.

Bohman, Frieze and Martin~\cite{bfm1} introduced a model which in some sense connects the two aforementioned questions together. Indeed, in their model one starts with a dense graph and then 
adds $m$ random edges to it. A natural problem in this setting is to determine \emph{how many} random edges are required to ensure that the resulting graph  a.a.s.~contains a given graph $H$ as a spanning subgraph.
For example, the main result in~\cite{bfm1} states that for every $\alpha>0$, there is a $c=c(\alpha)$ such that if we start with an arbitrary $n$-vertex graph $G$ of minimum degree $\delta(G)\geq \alpha n$ and
add $cn$ random edges to it, then a.a.s.~the resulting graph is Hamiltonian. This result is best possible in the sense that there are graphs $G$ of linear minimum degree that require a linear number of edges to be added to become Hamiltonian (for example, consider any complete bipartite graph with vertex classes of size $an,bn$ where $0<a<b<1$ and $a+b=1$).
Recently, Krivelevich, Kwan and Sudakov~\cite{kks1} proved an analogous result where now we wish to embed a fixed spanning tree of bounded degree.
Other properties of this model (embedding a fixed subgraph, the diameter, connectivity, Ramsey properties) have been studied, for example, in~\cite{bfkm, kst}.
In~\cite{kks2} the framework was also generalised to hypergraphs  and a number of exact results concerning perfect matchings and cycles in hypergraphs and digraphs were proven.
Further, since the paper was submitted, a range of other results in the area have been obtained~\cite{bhkm, benn, bhkmpp, bmpp2, hanzhao, joos2, mm}.

Krivelevich, Kwan and Sudakov~\cite{kks1} raised the question of determining an analogue of the  B\"ottcher--Schacht--Taraz theorem~\cite{bot} in the setting of randomly perturbed dense graphs. 
In this paper we consider an important  subcase of this problem; \emph{perfect $H$-tilings}.

\subsection{Perfect tilings in graphs and random graphs} 
Given two graphs $H$  and $G$, an \emph{$H$-tiling} in $G$ 
is a collection of vertex-disjoint copies of $H$ in $G$. An
$H$-tiling is called \emph{perfect} if it covers all the vertices of $G$.
Perfect $H$-tilings are also referred to as \emph{$H$-factors} or \emph{perfect $H$-packings}. 
A seminal result in extremal graph theory is the  celebrated Hajnal--Szemer\'edi theorem~\cite{hs} which determines the minimum degree threshold that ensures a graph
contains a perfect $K_r$-tiling. Building on this result, K\"uhn and Osthus~\cite{kuhn, kuhn2}  characterised, up to an additive constant, the minimum degree which ensures that a graph $G$ 
contains a perfect $H$-tiling for an \emph{arbitrary} graph $H$.

The perfect tiling problem for random graphs has also received significant attention. 
The most striking result in the area is a theorem of Johansson, Kahn and Vu~\cite{jkv} which determines the threshold for the property that the Erd\H{o}s--R\'enyi random graph $G_{n,p}$
a.a.s.~contains a perfect $H$-tiling where $H$ is any fixed \emph{strictly balanced} graph.  
Recently, Gerke and McDowell~\cite{gm} determined the corresponding threshold in the case when $H$ is a \emph{nonvertex-balanced} graph. A number of  subcases for both of these aforementioned results had been earlier proved e.g.~in~\cite{alonyuster, er, jlr, lr}. Since it is instructive to compare both of these results to our main result, we will formally state them below.

Suppose that $H$ is a graph on at least two vertices. We write $e(H)$ and $|H|$ for the number of edges and vertices in $G$ respectively. Define
$$d(H):=\frac{e(H)}{|H|-1} \ \ \text{ and } \ \ d^*(H):=\max \{ d(H') :{H'\subseteq H}, \, |H'|\geq 2\} .$$
We say that $H$ is \emph{strictly balanced} if for every proper subgraph $H'$ of $H$ with at least two vertices, $d(H')<d(H)$.
$H$ is \emph{balanced} if $d^*(H)=d(H)$. For $v\in V(H)$, let
$$d^*(v,H):=\max \{ d(H') :{H'\subseteq H}, \, |H'|\geq 2, \, v\in V(H')\}.$$
A graph $H$ is \emph{vertex-balanced} if, for all $v \in V(H)$, $d^*(v,H)=d^*(H)$. 

Notice that if $H$ is balanced then it is vertex-balanced. Further, if $H$ is nonvertex-balanced then it is not balanced and not strictly balanced. 
The following seminal result of Johansson, Kahn and Vu~\cite{jkv} determines the threshold for the property that  $G_{n,p}$
a.a.s.~contains a perfect $H$-tiling where $H$ is an arbitrary fixed strictly balanced graph.  
\begin{thm}[Johansson, Kahn and Vu~\cite{jkv}]\label{thmjkv}
Let $H$ be a strictly balanced graph with $m$ edges and let $n\in \mathbb N$ be divisible by $|H|$.
\begin{itemize}
\item If $p \gg n^{-1/d(H)}(\log n)^{1/m}$ then a.a.s.~$G_{n,p}$ contains a perfect $H$-tiling.
\item If $p \ll n^{-1/d(H)}(\log n)^{1/m}$ then a.a.s.~$G_{n,p}$ does not contain a perfect $H$-tiling.
\end{itemize}
\end{thm}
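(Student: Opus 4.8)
The statement has two halves: for $p$ below the threshold, $G_{n,p}$ a.a.s.\ has \emph{no} perfect $H$-tiling, and for $p$ above it, $G_{n,p}$ a.a.s.\ has one. These call for quite different arguments, and I would separate them.

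For the lower bound I would not reason about $H$-factors directly but about the set $U$ of vertices of $G_{n,p}$ lying in no copy of $H$: if $U\neq\emptyset$ then there is certainly no perfect $H$-tiling. The expected number of copies of $H$ through a fixed vertex is of order $n^{|H|-1}p^m$, and since $1/d(H)=(|H|-1)/m$ this quantity is $o(\log n)$ precisely when $p\ll n^{-1/d(H)}(\log n)^{1/m}$. Here the strictly-balanced hypothesis does the work: it forces this count of copies to be asymptotically Poisson, because a pair of overlapping copies of $H$ is governed by some proper subgraph $H'\subsetneq H$ with $d(H')<d(H)$ and so contributes only a lower-order term (Janson's inequality, or the method of moments). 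Hence $\Pr[v\in U]=\exp(-\Theta(n^{|H|-1}p^m))+o(1)$, so $\mathbb{E}|U|\to\infty$, and a second-moment computation — again invoking strict balance to show the events $\{v\in U\}$ are essentially uncorrelated — yields $U\neq\emptyset$ a.a.s.

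The upper bound is the substance of the theorem. The goal is to show that the number $Z$ of perfect $H$-factors in $G:=G_{n,p}$ is positive a.a.s. The plain second-moment method fails here, since partial $H$-factors with dense overlaps force $\mathbb{E}[Z^2]\gg\mathbb{E}[Z]^2$. The route I would take is to exhibit a spread distribution. Let $\mu$ be the uniform measure on the perfect $H$-factors of $K_n$ (each factor identified with the edge set of its $n/|H|$ copies of $H$, legitimate since $|H|$ divides $n$). One checks that $\mu$ is $q$-\emph{spread} with $q=Cn^{-1/d(H)}$, that is, $\mu(\{F:S\subseteq F\})\le q^{|S|}$ for every $S\subseteq E(K_n)$; this is exactly where strict balance enters, since for $S\subseteq F$ the number of factors containing $S$ is largest when $S$ is a disjoint union of whole copies of $H$ — every intermediate case being suppressed by $d(H')<d(H)$ for proper subgraphs — and this maximum, divided by the total number of $H$-factors of $K_n$, is at most $q^{|S|}$. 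Feeding $\mu$ into the threshold theorem of Frankston, Kahn, Narayanan and Park for spread distributions then shows that a $p$-random subgraph of $K_n$ with $p\ge Kq\log(1/q)=\Theta(n^{-1/d(H)}\log n)$ a.a.s.\ contains an element of the support of $\mu$, i.e.\ a perfect $H$-factor.

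That argument reaches only $p=\Theta(n^{-1/d(H)}\log n)$, which overshoots the claimed threshold by a factor $(\log n)^{1-1/m}$, so to obtain the sharp constant I would fall back on the original entropy argument of Johansson, Kahn and Vu, whose extra input is that the spreadness here is \emph{local and comes with slack}: at $p=\Theta(n^{-1/d(H)}(\log n)^{1/m})$ every vertex lies in $\Theta(\log n)$ copies of $H$, not merely in one, and this abundance persists after conditioning on part of a random factor having already been fixed. The coupon-collector loss of $\log n$ can then be shared out over the $m$ edges of each copy of $H$, replacing $\log(1/q)$ by $(\log(1/q))^{1/m}$ in the relevant estimate. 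I expect this last point — showing that the covering/spread property degrades only gracefully as one reveals a random partial $H$-factor, and carrying out the entropy bookkeeping precisely enough to land on the exponent $1/m$ rather than something larger — to be the main obstacle; the lower bound and the basic spread computation are comparatively routine.
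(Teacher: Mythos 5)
First, a framing point: the paper does not prove Theorem~\ref{thmjkv} at all --- it is quoted as background from Johansson, Kahn and Vu~\cite{jkv} --- so there is no internal proof to compare against and your proposal has to stand on its own. Your 0-statement sketch is the standard argument and is sound in outline: for $p\ll n^{-1/d(H)}(\log n)^{1/m}$ the expected number of copies of $H$ through a fixed vertex is $n^{|H|-1}p^m=o(\log n)$, the FKG/Harris inequality (for the lower bound on the probability of seeing no copy, Janson for the matching upper bound) together with strict balance gives $\Pr[v\text{ lies in no copy of }H]\geq e^{-O(n^{|H|-1}p^m)}=n^{-o(1)}$, so the expected number of uncovered vertices tends to infinity, and a second-moment computation (again using strict balance to control pairs of overlapping copies) shows some vertex is uncovered a.a.s., which kills any perfect $H$-tiling.

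The 1-statement, however, has a genuine gap. The spread computation plus the Frankston--Kahn--Narayanan--Park theorem yields, as you yourself note, only a threshold of order $n^{-1/d(H)}\log n$, which misses the claimed bound by a factor $(\log n)^{1-1/m}$; for $m\geq 2$ this is a real loss, not a technicality (for triangle factors the theorem asserts $n^{-2/3}(\log n)^{1/3}$, not $n^{-2/3}\log n$). Your proposed remedy is to ``fall back on the original entropy argument of Johansson, Kahn and Vu'', but that argument \emph{is} the theorem: invoking it is circular as a proof strategy, and the heuristic you offer in its place --- that the coupon-collector loss ``can be shared out over the $m$ edges of each copy'', replacing $\log(1/q)$ by $(\log(1/q))^{1/m}$ --- is a slogan, not an argument. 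No general spread-type theorem is known to deliver this fractional-logarithm saving; making it precise is exactly the long entropy/iterative machinery of~\cite{jkv}, none of which you reconstruct, and you candidly flag this step as the main obstacle. So as written your argument proves a.a.s.\ existence of a perfect $H$-tiling only for $p\gg n^{-1/d(H)}\log n$, which is strictly weaker than the stated theorem; the first bullet of Theorem~\ref{thmjkv} at its sharp threshold remains unproved in your proposal.
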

Note here (and elsewhere where we consider bounds on $p$), we write e.g. $p \ll n^{-1/d(H)}(\log n)^{1/m}$ to mean $p=o(n^{-1/d(H)}(\log n)^{1/m})$. (Later we also use the $\ll$ notation in hierarchies of constants; this is defined in Section~\ref{note}.)

Johansson, Kahn and Vu~\cite{jkv} conjectured that Theorem~\ref{thmjkv} can be generalised to \emph{all} vertex-balanced graphs $H$:
Given any vertex $v \in V(H)$, define $$s_v:=\min \{ e(H') :{H'\subseteq H}, \, |H'|\geq 2, \, v\in V(H') \text{ and } d(H')=d^*(v,H)\}$$ and let $s$ be the maximum of the $s_v$'s amongst all $v \in V(H)$.
They conjectured that $n^{-1/d^*(H)}(\log n)^{1/s}$ is the threshold for the property that  $G_{n,p}$
a.a.s.~contains a perfect $H$-tiling where $H$ is an arbitrary fixed vertex-balanced graph. Note that Theorem~\ref{thmjkv} is a special case of this conjecture.

For nonvertex-balanced graphs, the following result determines the corresponding threshold.
 \begin{thm}[Gerke and McDowell~\cite{gm}]\label{thmgm}
Let $H$ be a nonvertex-balanced graph and let  $n\in \mathbb N$ be divisible by $|H|$.
\begin{itemize}
\item If $p \gg n^{-1/d^*(H)}$ then a.a.s.~$G_{n,p}$ contains a perfect $H$-tiling.
\item If $p \ll n^{-1/d^*(H)}$ then a.a.s.~$G_{n,p}$ does not contain a perfect $H$-tiling.
\end{itemize}
\end{thm}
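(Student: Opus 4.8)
I would prove the two halves separately; the $0$-statement is a short first‑moment argument, while the $1$-statement calls for the absorption method, with the whole difficulty concentrated in one local‑structure estimate.

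\medskip
\noindent\textbf{The $0$-statement.} Suppose $p \ll n^{-1/d^*(H)}$. Since $H$ is nonvertex-balanced it has an edge, so fix a subgraph $F \subseteq H$ with $|F| \ge 2$ attaining $d(F) = e(F)/(|F|-1) = d^*(H)$ (a minimal such $F$ is even strictly balanced, though that is not needed here). The expected number of labelled copies of $F$ in $G_{n,p}$ is at most $n^{|F|}p^{e(F)}$, and as $p \ll n^{-(|F|-1)/e(F)}$ this is $o(n)$; by Markov a.a.s.\ $G_{n,p}$ has fewer than $n/(2|H|)$ copies of $F$. On the other hand, since $F$ is a subgraph of $H$, any embedding of $H$ restricts to an embedding of $F$, so a perfect $H$-tiling would yield $n/|H|$ pairwise vertex-disjoint copies of $F$ --- a contradiction. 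Hence a.a.s.\ $G_{n,p}$ has no perfect $H$-tiling.

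\medskip
\noindent\textbf{The $1$-statement: plan.} Now suppose $p \gg n^{-1/d^*(H)}$. I would use two consequences of nonvertex-balancedness throughout: (i) since $H$ is not balanced, $d(H) < d^*(H)$; and (ii) there is a vertex $v_0 \in V(H)$ with $d^*(v_0,H) < d^*(H)$, so in particular $d(H') \le d^*(v_0,H) < d^*(H)$ for every subgraph $H' \ni v_0$ with $|H'| \ge 2$. First I would split the randomness by two-round exposure, coupling $G_{n,p}$ so that it contains $G' \cup G''$ with $G',G''$ independent copies of $G_{n,p/2}$ (still with $p/2 \gg n^{-1/d^*(H)}$), reserving $G'$ for an absorbing structure and $G''$ for the bulk. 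The key local property to establish for $G'$ is a \emph{rooted richness} statement: a.a.s.\ every vertex $x$ lies in at least $n^{\beta}/2$ copies of $H$ in which $x$ plays the role of $v_0$, where $\beta := (|H|-1)\big(1 - d(H)/d^*(H)\big) > 0$. The first moment of the number of such rooted copies is $\Theta\big(n^{|H|-1}p^{e(H)}\big) = \omega(n^{\beta})$ since $e(H)/d^*(H) \le |H|-1$; and the second moment is controlled using (ii) --- if two rooted copies meet in a further vertex, their intersection is governed by a subgraph $H' \ni v_0$ with $d(H') < d^*(H)$, so $p$ lies strictly above the critical exponent for $H'$ and every overlap term is of lower order. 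Chebyshev for a fixed $x$, plus a polynomial concentration inequality of Kim--Vu type for the union bound over $x$, then gives the claim. From rooted richness one obtains, by a now-standard absorption argument, an absorbing set $A \subseteq V(G')$ with $|A| = \delta n$ (a small constant $\delta > 0$, with $|H| \mid |A|$) and a perfect $H$-tiling of $G'[A]$, such that for every $W \subseteq V \setminus A$ with $|W| \le \mu\delta n$ and $|H| \mid |W|$, the graph $(G' \cup G'')[A \cup W]$ has a perfect $H$-tiling: one slots batches of $|H|$ leftover vertices into $v_0$-positions of copies of $H$ while re-tiling a bounded number of pieces of the absorbing structure at a time.

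\medskip
\noindent\textbf{Finishing, and the main obstacle.} It remains to cover $V \setminus A$ up to a remainder of size at most $\mu\delta n$. For this I would use that $p$ exceeds the appearance threshold of $H$ by a polynomial factor: for any fixed $\gamma>0$, a.a.s.\ every set of at least $\gamma n$ vertices of $G''$ spans a copy of $H$, since the expected number of copies of $H$ inside such a set is $\omega(n)$ (again using $e(H)/d^*(H) < |H|-1$), so Janson's inequality bounds the failure probability of a fixed set by $e^{-\omega(n)}$, which beats the $\binom{n}{\gamma n} \le 2^n$ union bound. Choosing $\gamma \le \mu\delta$ and greedily removing disjoint copies of $H$ from $V\setminus A$ leaves fewer than $\gamma n$ vertices, divisible by $|H|$, which $A$ absorbs; this completes a perfect $H$-tiling of $G_{n,p}$. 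The delicate point is rooted richness (and building the absorbers on top of it): because $p$ sits essentially at the appearance threshold of the densest subgraph $F$ --- there is no $(\log n)$-factor of slack, unlike the strictly balanced regime --- the second-moment and concentration estimates must be run carefully, and it is precisely the strict inequalities $d(H) < d^*(H)$ and $d^*(v_0,H) < d^*(H)$ that make them go through; for balanced $H$ they fail, the absorbers disappear at this density, and one is forced up to the threshold of Theorem~\ref{thmjkv}.
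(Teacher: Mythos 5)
You should first note that the paper you are working from does not prove this statement at all: Theorem~\ref{thmgm} is quoted from Gerke and McDowell~\cite{gm} as background, so there is no in-paper proof to compare against, and your proposal has to stand on its own. Your $0$-statement does stand: the first-moment/Markov count of copies of a densest subgraph $F$ with $d(F)=d^*(H)$, combined with the observation that a perfect $H$-tiling forces $n/|H|$ disjoint copies of $F$, is correct (it is essentially the same mechanism as Theorem~\ref{jansonthm3}). The rooted-richness lemma in your $1$-statement is also believable as stated: the overlap terms in the second moment are controlled either by the strict inequality $d^*(v_0,H)<d^*(H)$ (when the shared edges meet the root) or by the saving of a factor $n$ from the fixed root (when they do not), and Janson's inequality then gives failure probability $e^{-n^{\Omega(1)}}$, which survives the union bound over the $n$ choices of root.

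The genuine gap is the sentence ``From rooted richness one obtains, by a now-standard absorption argument, an absorbing set $A$\dots''. This is where the whole difficulty lives, and the standard absorber does not exist at this density for general nonvertex-balanced $H$. The mechanism you describe (slot $w_1,\dots,w_{|H|}$ into $v_0$-positions of $|H|$ new copies of $H$ whose remaining vertices are exactly the union of $|H|-1$ existing tiles, then re-tile) is precisely the classical gadget: a copy $C$ of $H$ on $u_1,\dots,u_{|H|}$ together with, for each $i$, a copy of $H-v_0$ on a set $D_i$ to which both $u_i$ and $w_i$ attach in the $v_0$-pattern. Such a gadget has $|H|^2$ non-root vertices and $e(H)+|H|\bigl(e(H)+\deg_H(v_0)\bigr)$ edges, and at $p=\Theta(n^{-1/d^*(H)})$ its expected number rooted at a \emph{fixed} $|H|$-set $S$ is $n^{|H|^2}p^{e(H)+|H|(e(H)+\deg_H(v_0))}$, which can be polynomially small. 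Concretely, take $H$ to be $K_6$ plus a vertex $v_0$ joined to two clique vertices: then $|H|=7$, $e(H)=17$, $d^*(H)=3$, $d^*(v_0,H)=17/6<3$, so $H$ is nonvertex-balanced, yet the gadget has $49$ non-root vertices and $17+7\cdot 19=150$ edges, giving expected count $\Theta(n^{49-50})=\Theta(n^{-1})$ at $p=\Theta(n^{-1/3})$ (and still $o(1)$ for any $p=n^{-1/3+o(1)}$, which the theorem must cover since it only assumes $p\gg n^{-1/d^*(H)}$). So for most $|H|$-sets $S$ no such absorber exists, whereas the absorbing method needs every (or essentially every) $|H|$-set of potential leftover vertices to admit many gadgets. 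Rooted richness at $v_0$ alone therefore does not feed a standard absorption argument; a genuinely different mechanism (as in the actual Gerke--McDowell proof, which exploits the decomposition of $H$ into its densest pieces and the remaining, sparser, rooted structure) is needed, and as written your $1$-statement is incomplete at its central step.
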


\subsection{Tilings in randomly perturbed dense graphs}
The main result of this paper is to determine how many random edges one needs to add to a graph of linear minimum degree to ensure it contains a perfect $H$-tiling for \emph{any} fixed graph $H$.
Throughout the paper we assume that an $n$-vertex graph has vertex set $[n]$ and if $G$ and $G'$ are $n$-vertex graphs then we define $G\cup G'$ to be the $n$-vertex (simple) graph with edge set $E(G) \cup E(G')$.
We are now ready to state our main result.
\begin{thm}\label{mainthm}
Let $H$ be a fixed graph with at least one edge and let $n \in \mathbb N$ be divisible by $|H|$.
For every $\alpha >0$, there is a $c=c(\alpha,H)>0$ such that if $p\geq cn^{-1/d^*(H)}$ and
$G$ is an $n$-vertex graph with minimum degree $\delta (G) \geq \alpha n$
then a.a.s.~$G\cup G_{n,p}$ contains a perfect $H$-tiling.
\end{thm}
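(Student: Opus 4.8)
The plan is to let the random edges supply essentially all of the copies of $H$ in the tiling, while the linear minimum degree of $G$ is used only to repair the few vertices that the random graph fails to cover. This division is essentially forced: $G$ alone need contain no copy of $H$ whatsoever (take $G$ complete bipartite when $H$ is non-bipartite), so every copy of $H$ in the eventual tiling must use at least one random edge; on the other hand the $\Theta(n^{2-1/d^*(H)})$ random edges comfortably exceed the $n/|H|$ copies required. Fix a subgraph $H^*\subseteq H$ with $d(H^*)=d^*(H)$; we may take $H^*$ connected, and then it is balanced. Choose constants $1/c\ll\varepsilon\ll d\ll\eta\ll\gamma\ll\alpha,1/|H|$, write $G_{n,p}$ as a union $G^{(1)}\cup G^{(2)}\cup G^{(3)}$ of three independent binomial random graphs of density $\Theta(p)$ each, and set aside a uniformly random set $R\subseteq V(G)$ with $|H|\mid|R|$ and $|R|=(1+o(1))\gamma n$. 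By Chernoff's inequality, a.a.s.\ $\delta(G[R])\ge\tfrac{\alpha}{2}|R|$ and every vertex of $G$ has at least $\tfrac{\alpha}{2}|R|$ neighbours in $R$, so $R$ will serve as a reservoir retaining a linear minimum degree.

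\textbf{Step 1: an almost-perfect tiling.} I would first cover all but a sublinear part of $V(G)\setminus R$ by vertex-disjoint copies of $H$. The main probabilistic input is that, for $c$ large, a.a.s.\ \emph{every} set $U$ with $|U|\ge\eta n$ spans a copy of $H$ in $G^{(1)}$: since $p\ge cn^{-1/d^*(H)}$ and $d^*(H)\ge d(H)$, the expected number of copies of $H$ inside a fixed $\eta n$-set is $\Omega_{\eta,c}(n)$, and — this is exactly where the exponent $1/d^*(H)$ is sharp — the fact that $H^*$ is the densest subgraph of $H$ keeps the second moment small enough for Janson's inequality to give that this set spans no copy of $H$ with probability $e^{-\Omega(n)}$, beating the union bound over all $2^n$ choices of $U$. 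A maximal $H$-tiling of $G^{(1)}$ inside $V(G)\setminus R$ then leaves an uncovered set $L$ with $|L|<\eta n$. (In this step, Szemer\'edi's regularity lemma~\cite{szemeredi} and the relevant special case of Koml\'os's almost-tiling theorem~\cite{komlos} are used to make the almost-tiling robust and, in the regime where $\alpha$ is large enough that $G$ is already dense in the sense of Koml\'os's theorem, to pull an almost-perfect $H$-tiling directly out of $G$, thereby shrinking the set that the random edges must take care of.)

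\textbf{Step 2: cleaning up.} It remains to find a perfect $H$-tiling of $(G\cup G^{(2)}\cup G^{(3)})[L\cup R]$. By the choice of $R$ this graph already has minimum degree at least $\tfrac{\alpha}{4}|L\cup R|$ coming from $G$, since $|L|\ll|R|$ and each vertex of $L$ has $\ge\tfrac{\alpha}{2}|R|$ neighbours in $R$. Inside $R$ I would first build, using $G^{(3)}$ together with the edges of $G$, a bounded-size \emph{absorbing structure} and an almost-$H$-tiling of the rest of $R$; then I would attach the vertices of $L$ in batches of $|H|$, each batch together with one copy of $H$ of the tiling of $R$ being replaced by two copies of $H$ realised using the $G$-edges from $L$ to $R$ and a constant number of random edges of $G^{(2)}$; and finally absorb the few remaining uncovered vertices into the absorbing structure. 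The divisibility conditions $|H|\mid n$ and $|H|\mid|R|$ make all the batch sizes come out right.

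\textbf{Main obstacle.} The crux is the interface between the deterministic and the random parts. Because $p$ is only of order $n^{-1/d^*(H)}$, a positive proportion of the vertices lie in \emph{no} copy of $H$ in $G_{n,p}$ — the coupon-collector effect responsible for the extra polylogarithmic factor in the thresholds of Theorems~\ref{thmjkv} and~\ref{thmgm} — so each such vertex must be inserted into a copy of $H$ assembled from its $\ge\alpha n$ edges of $G$ and only $O(1)$ of the sparse random edges. Arranging this for all of these vertices at once, with the copies pairwise disjoint and so that the whole vertex set ultimately partitions into blocks of size $|H|$, is the technical heart of the proof; the reservoir $R$, the absorbing structure, and the regularity-plus-Koml\'os skeleton are precisely what provide the room and the structure needed to do it.
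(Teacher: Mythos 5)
Your Step 1 is sound: the statement that a.a.s.\ every vertex subset of size at least $\eta n$ spans a copy of $H$ in a binomial graph of density $\Theta(n^{-1/d^*(H)})$ is exactly Theorem~\ref{jansonthm1} (Theorem 3.9 of \cite{jlr} gives failure probability $e^{-\Omega(n)}$ per set, with the constant growing with $c$, so the union bound over $2^n$ sets goes through), and a maximal tiling then leaves a leftover set $L$ of size at most $\eta n$. The gap is Step 2, which is not a clean-up step but the whole theorem again: $(G\cup G^{(2)}\cup G^{(3)})[L\cup R]$ is an arbitrary graph of linear minimum degree perturbed by a binomial random graph whose density, relative to $|L\cup R|=\Theta(\gamma n)$, is again $\Theta(|L\cup R|^{-1/d^*(H)})$, so ``find a perfect $H$-tiling of it'' is verbatim the statement of Theorem~\ref{mainthm} on a smaller vertex set. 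All the content is therefore in the ``bounded-size absorbing structure'', and you give no construction of it; at this density none of the standard constructions apply. An absorber for a leftover $|H|$-set $S$ would be a set $A$ such that both $(G\cup G_{n,p})[A]$ and $(G\cup G_{n,p})[A\cup S]$ have perfect $H$-tilings; since $G$ is adversarial (e.g.\ complete bipartite) and the random edges are too sparse to appear between prescribed constant-size sets, gluing the constant-size gadgets ``copy of $H-x$ in $N_G(v)$'' for the vertices $v\in S$ into such a set $A$ that is itself perfectly tileable requires deterministic structure in $G$ that you have not extracted. Supplying that structure is precisely what the paper's machinery does: regularity plus Koml\'os's theorem produce super-regular blown-up stars (Claim~\ref{claim1}), the exceptional and leftover vertices are fed into them with careful bookkeeping (Claims~\ref{claim2}--\ref{claim5}, using Theorem~\ref{jansonthm2} for divisibility), and each super-regular pair is finished by the argument of \cite{triangle} via good/excellent $h$-sets and Theorem~\ref{jansonthm4}. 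That structural work is the heart of the proof and is missing from your proposal.

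Moreover, the one concrete mechanism you do describe fails. Take $H=K_3$: a batch $B$ of $|H|=3$ leftover vertices a.a.s.\ spans no random edge, and it may span no edge of $G$ either (e.g.\ $G$ complete bipartite with $B$ on one side), so any partition of $B\cup V(H_0)$ into two triangles would need an edge inside $B$; the same obstruction appears for general $H$ whenever two batch vertices must occupy adjacent positions of $H$. In general each leftover vertex can only be placed in a copy of $H$ built from $|H|-1$ fresh vertices of its $G$-neighbourhood carrying a copy of $H-x$ in the random graph (this is the paper's Claim~\ref{claim2} move), after which you are left with perfectly tiling a positive fraction of $R$ --- again the original problem --- and the delicate divisibility issues (which the paper resolves pair-by-pair inside the super-regular structure) are not addressed by requiring $|H|\mid |R|$. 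As a minor point, your assertion that a positive proportion of vertices lie in no copy of $H$ in $G_{n,p}$ is true for strictly balanced $H$ but not for all $H$; it is not load-bearing, but it should not be stated as a general fact.
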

Theorem~\ref{mainthm} is best-possible in the sense that, for any fixed graph $H$, there are $n$-vertex graphs $G$ of linear minimum degree such that if $p\ll n^{-1/d^*(H)}$ then a.a.s.~$G\cup G_{n,p}$ does not contain a perfect $H$-tiling. We explain this in more detail in Section~\ref{extremeeg}. 

In the case when $H$ is strictly balanced notice that unlike Theorem~\ref{thmjkv}, Theorem~\ref{mainthm} does not involve a logarithmic term. Thus comparing  our model with the  Erd\H{o}s--R\'enyi model,
we see that starting with a graph of linear minimum degree instead of the empty graph
saves a logarithmic factor in terms of how many random edges one needs to ensure the resulting graph a.a.s.~contains a perfect $H$-tiling.
This same phenomenon is also exhibited in the analogous problems for Hamilton cycles~\cite{bfm1} and spanning trees~\cite{kks1}, as well as for matchings and loose cycles in the hypergraph setting~\cite{kks2}. Further, if the Johansson, Kahn and Vu conjecture is true then  together
with Theorem~\ref{mainthm} this shows that the same phenomenon occurs for perfect $H$-tilings for any vertex-balanced $H$. Interestingly though the threshold in Theorem~\ref{thmgm} is the \emph{same} as that
in Theorem~\ref{mainthm}. In other words, if $H$ is nonvertex-balanced, starting with a graph of linear minimum degree instead of the empty graph essentially
provides \emph{no benefit} in terms of how many random edges one needs to ensure the resulting graph a.a.s.~contains a perfect $H$-tiling!

It is also instructive to compare Theorem~\ref{mainthm} to the problem of finding an almost perfect tiling in the random graph:
We say that  $G_{n,p}$ has an \emph{almost perfect $H$-tiling} if for every $\eps>0$, the probability that the largest $H$-tiling in $G_{n,p}$ covers less than $(1-\eps)n$ vertices tends to zero as $n$ tends to infinity.
Ruci\'nski~\cite{rucinski} proved that $n^{-1/d^*(H)}$ is the threshold for $G_{n,p}$ having an almost perfect $H$-tiling (for any fixed graph $H$). Thus, in Theorem~\ref{mainthm} one can already guarantee an almost perfect $H$-tiling without using any of the
 edges from $G$. Hence (in the case when $H$ is strictly balanced), the edges in $G$ are necessary to `transform' an almost perfect $H$-tiling in $G_{n,p}$ into a perfect $H$-tiling in $G\cup G_{n,p}$.

The  proof of Theorem~\ref{mainthm} utilises Szemer\'edi's Regularity lemma~\cite{szemeredi} as well as a special case of a result of Koml\'os~\cite{komlos} concerning almost perfect $H$-tilings in dense graphs.
We also draw on ideas from~\cite{triangle, kks1}. In Section~\ref{seco} we give an overview of the proof.

\subsection{Notation}\label{note}
Let $G$ be a graph.
We write $V(G)$ for the vertex set of $G$, $E(G)$ for the edge set of $G$ and define
$|G|:=|V(G)|$ and $e(G):= |E(G)|$. Given a subset $X \subseteq V(G)$, we write $G[X]$ for the subgraph of $G$ induced by $X$. Given some $x \in V(G)$ we write 
$G-x$ for the subgraph of $G$ induced by $V(G)\setminus \{x\}$.
The degree of $x $ is denoted by $d_G(x)$ and its neighbourhood
by $N_G(x)$. Given a vertex $x \in V(G)$ and a set $Y \subseteq V(G)$ we write $d _G (x,Y)$ to denote the number of edges $xy$ where $y \in Y$. 
Given disjoint vertex classes $X,Y\subseteq V(G)$,
we write $G[X,Y]$ for the bipartite graph with vertex classes $X$ and $Y$ whose edge set consists of all those edges in $G$
with one endpoint in $X$ and the other in $Y$;
 we write $e_G(X,Y)$ for the number of edges in $G[X,Y]$.
Given a set $X$ and $t\in \mathbb N$, let $\binom{X}{t}$ denote the set of all subsets of $X$ of size $t$.

We write $0<\alpha \ll \beta \ll \gamma$ to mean that we can choose the constants
$\alpha, \beta, \gamma$ from right to left. More
precisely, there are increasing functions $f$ and $g$ such that, given
$\gamma$, whenever we choose some $\beta \leq f(\gamma)$ and $\alpha \leq g(\beta)$, all
calculations needed in our proof are valid. 
Hierarchies of other lengths are defined in the obvious way.
Throughout the paper we omit floors and ceilings whenever this does not affect the
argument.

\smallskip

The paper is organised as follows. In Section~\ref{secover} we give an overview of the proof of Theorem~\ref{mainthm}; we also give an example that shows the bound on $p$ in Theorem~\ref{mainthm} is best possible up to a  multiplicative constant.
 Szemer\'edi's Regularity lemma is presented in Section~\ref{secreg} and then  we introduce some useful tools in Section~\ref{extreme}. The proof of Theorem~\ref{mainthm} is  given in Section~\ref{secproof}.

\section{An extremal example and overview of the proof}\label{secover}
\subsection{An extremal example}\label{extremeeg}
In this subsection we prove that Theorem~\ref{mainthm} is best-possible in the sense that, given any graph $H$, there exist (sequences of) $n$-vertex graphs $G_n$ with linear minimum degree so that, if $p\ll n^{-1/d^*(H)}$, then a.a.s.~$G_n\cup G_{n,p}$ does not contain a perfect $H$-tiling.
For this we require the following result. Recall the definition
$$d^*(H):=\max\bigg\{\frac{e(H')}{v(H')-1}: H'\subseteq H, |H'|\geq 2\bigg\}.$$
\begin{thm}[\label{jansonthm3}\cite{jlr}, part of Theorem $4.9$]
For every graph $H$ with at least one edge and for every $0<\eps<1$ there is a positive constant $c=c(H,\eps)$ such that if $p\leq cn^{-1/d^*(H)}$,
$$\lim_{n\rightarrow \infty} \mathbb{P}(G_{n,p} \ \text{contains an $H$-tiling covering at least $\eps n$ vertices})=0.$$
\end{thm}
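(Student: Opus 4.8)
The plan is a direct first-moment computation, carried out not on copies of $H$ but on vertex-disjoint copies of a densest subgraph. First fix a subgraph $H'\subseteq H$ with $|H'|\geq 2$ attaining $d(H')=d^*(H)$ (taking $H'$ with fewest vertices among such subgraphs makes it strictly balanced, but we will not need this). The key point is that every copy of $H$ in $G_{n,p}$ contains a copy of $H'$ on a subset of its vertices, so if $G_{n,p}$ contained an $H$-tiling covering at least $\eps n$ vertices --- that is, at least $\eps n/|H|$ pairwise vertex-disjoint copies of $H$ --- then it would contain $t:=\eps n/|H|$ pairwise vertex-disjoint copies of $H'$. Hence it suffices to show that a.a.s.~$G_{n,p}$ has no family of $t$ vertex-disjoint copies of $H'$.

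To estimate the probability of the latter, let $Z$ count the ordered $t$-tuples $(F_1,\dots,F_t)$ of pairwise vertex-disjoint labelled copies of $H'$ present in $G_{n,p}$. The number of such tuples inside $K_n$ is at most $n^{t|H'|}$ (at most $n^{|H'|}$ injective maps $V(H')\to V(K_n)$ in each coordinate), and for any fixed tuple the $t$ copies occupy disjoint vertex sets and hence involve $t\,e(H')$ distinct potential edges, each present independently with probability $p$. Thus $\mathbb{E}[Z]\leq n^{t|H'|}p^{t\,e(H')}$. Since the event that $G_{n,p}$ contains $t$ disjoint copies of $H'$ is contained in the event $Z\geq t!$, Markov's inequality gives
$$\mathbb{P}\big(G_{n,p}\text{ contains }t\text{ disjoint copies of }H'\big)\ \leq\ \frac{1}{t!}\,n^{t|H'|}p^{t\,e(H')}\ =\ \frac{1}{t!}\Big(n^{|H'|}p^{e(H')}\Big)^{t}.$$
Now $d(H')=d^*(H)$ means $e(H')=d^*(H)\,(|H'|-1)$, so the bound $p\leq c\,n^{-1/d^*(H)}$ gives $n^{|H'|}p^{e(H')}\leq c^{e(H')}n^{|H'|}n^{-(|H'|-1)}=c^{e(H')}n$; combining this with $t!\geq (t/e)^t$ and $t=\eps n/|H|$ shows the right-hand side above is at most $\big(e\,c^{e(H')}|H|/\eps\big)^{t}$.

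It then remains only to choose $c=c(H,\eps)>0$ small enough that $e\,c^{e(H')}|H|/\eps\leq 1/2$, which makes this probability at most $2^{-\eps n/|H|}$ and hence $o(1)$, as required. The computation is routine; what does the real work is the initial reduction to $H'$, which pins the exponent of $p$ to $d^*(H)$, together with the observation that vertex-disjoint copies of $H'$ use disjoint edge slots, so the first-moment estimate is exact and no second-moment or correlation input is needed. The only bookkeeping subtlety is the presence of automorphisms of $H'$, and this is harmless: we need only an upper bound, and over-counting labelled copies can only weaken it.
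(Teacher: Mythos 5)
Your proof is correct. Note that the paper never proves this statement itself -- it is quoted directly from \cite{jlr} (part of Theorem 4.9) -- so there is no internal argument to compare against, and your self-contained first-moment argument is a valid substitute. The step that does the real work, and is worth emphasising, is the tuple-counting trick: a plain Markov bound on the number of copies of the densest subgraph $H'$ would only show that this count exceeds $\eps n/|H|$ with probability at most roughly $c^{e(H')}|H|/\eps$, a small constant but not $o(1)$, which is not enough for the stated limit. Counting ordered $t$-tuples of pairwise vertex-disjoint labelled copies and dividing by $t!$ upgrades this to the bound $\bigl(e\,c^{e(H')}|H|/\eps\bigr)^{t}$ with $t=\eps n/|H|$, which is exponentially small once $c$ is chosen with $e\,c^{e(H')}|H|/\eps\leq 1/2$; this is legitimate because vertex-disjoint copies occupy disjoint edge slots (so the probability per tuple is exactly $p^{t\,e(H')}$), because $e(H')=d^*(H)\,(|H'|-1)\geq 1$ guarantees such a $c$ exists, and because any $H$-tiling covering $\eps n$ vertices yields $t$ vertex-disjoint copies of $H'$. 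The remaining issues (automorphism over-counting, floors and ceilings in $t$) are harmless, as you say.
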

Consider any 
fixed graph   $H$ with at least one edge and let $b>a>0$ such that $a+b=1$ and $b>a(|H|-1)$. Set $\eps:=b-a(|H|-1)>0$ 
and define $c=c(H,\eps)$ as in Theorem~\ref{jansonthm3}. Let $n\in \mathbb N$ be divisible by $|H|$ and 
let $G_n$ be the complete  bipartite graph with vertex classes $X$ and $Y$ of sizes $an$ and $bn$ respectively.
Consider $G'_n:=G_n\cup G_{n,p}$ where $p\leq c(bn)^{-1/d^*(H)}$. Notice that if $G'_n$ contains a perfect $H$-tiling, then $G_{n,p}[Y]\cong G_{bn,p}$ must contain an $H$-tiling covering at least $\eps n> \eps (bn)$ vertices.
However, by the choice of $c$, Theorem~\ref{jansonthm3} implies that a.a.s.~such an $H$-tiling in $G_{n,p}[Y]$ does not exist. Thus, a.a.s.~$G'_n$ does not contain a perfect $H$-tiling.
So we have indeed shown that the bound on $p$ in Theorem~\ref{mainthm} is best-possible.

The above example shows that if $0<\alpha < 1/|H|$ then we need the random edges with $p\geq Cn^{-1/d^*(H)}$  to force a perfect $H$-tiling. We also note that if $\alpha > 1-\frac{1}{\chi(H)}$ and $n$ is large then by a theorem of Koml\'os,
S\'ark\"ozy and Szemer\'edi~\cite{komlossarkozy}, every $n$-vertex graph $G_n$ of minimum degree $\alpha n$ contains a perfect $H$-tiling and thus there is no need for the random edges at all.  
More generally, given any graph $H$, K\"uhn and Osthus~\cite{kuhn, kuhn2} determined the smallest
$\alpha ^*=\alpha^*(H)>0$  such that, given any $\alpha >\alpha^*$,   every sufficiently large $n$-vertex graph $G_n$ of minimum degree at least $\alpha n$ contains a perfect $H$-tiling.
It may be of interest to investigate the following question: given fixed $\alpha$ with $1/|H| < \alpha < \alpha^*$, how large must $p$ be to ensure that whenever $G_n$ has minimum degree at least $\alpha n$ then $G_n\cup G_{n,p}$ a.a.s.~contains a perfect $H$-tiling? 

Another natural question is whether Theorem~\ref{mainthm} holds if we replace $\alpha n$ with a sublinear term. Note that the approach we use is not suitable for attacking this problem (since we apply Szemer\'edi's Regularity lemma). 

\subsection{Overview of the proof of Theorem~\ref{mainthm}}\label{seco}
The first step of the proof is to obtain some special structure within $G$ to help us embed the perfect $H$-tiling. In particular, by applying Szemer\'edi's Regularity lemma (Lemma~\ref{regu}) and a theorem of Koml\'os (Theorem~\ref{komlos})
we obtain a spanning subgraph of $G$ which `looks' like the blow-up of a collection of stars. 
More precisely, there is a spanning subgraph $G'$ of $G$; constants
 $k,t \in \mathbb N$, and; a partition of $V(G)$ into classes $V_0$ and $W_{i,j}$
 (for all 
$0 \leq i \leq t$ and $1\leq j \leq k$), 
 such that: 
\begin{itemize}
\item $V_0$ is `small';
\item Each cluster $W_{i,j}$ has the same size;
\item For all $1\leq i \leq t$, $1\leq j \leq k$, the pair $(W_{0,j},W_{i,j})_{G'}$ is `super-regular'.
\end{itemize}
We remark that a similar structure was used  in~\cite{kks1} (though the 
role of the blown-up stars was different there).

If $t=1$, then the aforementioned structure would consist of a collection of disjoint
super-regular pairs $(W_{0,j},W_{1,j})_{G'}$ and an `exceptional set' $V_0$.
We could then obtain a perfect $H$-tiling using the strategy described below.

The first step is to find a small $H$-tiling
$\mathcal H$ that covers all of $V_0$ but so that $\mathcal H$ only intersects the super-regular pairs $(W_{0,j},W_{1,j})_{G'}$ in a very small number of vertices.
To see that such an $H$-tiling $\mathcal H$ in $G \cup G_{n,p}$ exists, note that given any $v \in V_0$, $N_{G}(v)$ has linear size so a.a.s, $G_{n,p}$ contains many copies of $H-x$ in $N_{G}(v)$ (for some $x \in V(H)$). In particular, this implies that $v$ lies in many copies of $H$ in $G \cup G_{n,p}$. Thus, this property allows us to  greedily construct $\mathcal H$ (though some care is needed to ensure we do not use too many vertices in any one cluster $W_{i,j}$).

Now if we remove all the vertices lying in $\mathcal H$ we still have that each
$(W_{0,j},W_{1,j})_{G'}$ is super-regular. This structure can then be used to find
an $H$-tiling $\mathcal H_j$ in $G\cup G_{n,p}$ covering precisely the vertices of $W_{0,j}\cup W_{1,j}$. Indeed, in this case we employ an approach very similar to that used in~\cite{triangle}. 
Then $\mathcal H$, $\mathcal H_1$, \dots, $\mathcal H_k$ together
form a perfect $H$-tiling in $G\cup G_{n,p}$, as desired.

In particular, note that roughly speaking the authors of~\cite{triangle} prove that if $(A,B)$ is a (very dense) super-regular pair in a graph $G$ of small independence number, then there is a triangle-tiling in $G$ covering precisely the vertices of $A\cup B$.
Here the small independence number ensures we have large matchings in both $G[A]$ and $G[B]$; then the edges between $A$ and $B$ can be used to extend such edges to triangles with at least one vertex in each class, and thus ultimately (with significant care) one obtains the desired triangle-tiling.
In our setting, the edges from $G_{n,p}$ ensure that, given any super-regular pair $(A,B)$ in $G$, we have large  $(H-x)$-tilings in both $G_{n,p}[A]$ and $G_{n,p}[B]$ (for some $x \in V(H)$). We then (again with some care) extend such copies of $H-x$ to copies of $H$ using the edges between $A$ and $B$ in $G$, to obtain the desired $H$-tiling.

To employ the approach used in~\cite{triangle}, we really do require that
 $t=1$. That is, the structure in $G$ looks like a blow-up of disjoint edges. However, since the minimum degree of $G$ is typically very small we can only ensure a structure in $G$ that looks like a blow-up of stars, each of which contains a huge constant number of leaves (i.e.~$t$ is large). 

Instead, we have to first carefully choose a large $H$-tiling $\mathcal H'$ in $G\cup G_{n,p}$
so that what remains uncovered by $\mathcal H'$ is precisely a collection of clusters that, in $G'$, form
disjoint super-regular pairs. We then can proceed as described above.
To construct a suitable $H$-tiling $\mathcal H'$, our proof heavily uses the blown-up star structure we found initially in $G$.

\section{The Regularity lemma}\label{secreg}
In the proof of our main result we will make use of the Szemer\'{e}di's Regularity lemma~\cite{szemeredi}, hence in this section we introduce the necessary notation and set-up for this lemma. 
The \emph{density} of a bipartite graph $G$ with vertex classes $A$ and $B$ is defined as 
$$d_G(A,B):=\frac{e(A,B)}{|A||B|}.$$ 
Given any $\eps>0$ we say that $G$ is $\eps$\emph{-regular} if for all sets $X\subseteq A$ and $Y\subseteq B$ with $|X|\geq \eps|A|$ and $|Y|\geq \eps |B|$ we have $|d_G(A,B)-d_G(X,Y)|<\eps$. In this case we also say that $(A,B)_G$ is an $\eps$\emph{-regular pair}. Given $d\in [0,1)$ we say that $G$ is $(\eps,d)$\emph{-super-regular} if all sets $X\subseteq A$ and $Y\subseteq B$ with $|X|\geq \eps |A|$ and $|Y|\geq \eps |B|$ satisfy $d_G(X,Y)>d$ and moreover $d_G(a)>d|B|$ and $d_G(b)>d|A|$ for all $a\in A$ and $b\in B$. 

The following degree form of the Regularity lemma can be easily derived from the classical version and will be particularly useful for us.

\begin{lemma}[\label{regu}Regularity lemma]
For every $\eps>0$ and each integer $\ell_0$ there is an $M = M(\eps, \ell_0)$ such that if $G$ is any graph on at least $M$ vertices and $d \in [0, 1)$, then there exists a partition of $V(G)$ into $\ell+1$ classes $V_0, V_1, \ldots, V_\ell$, and a spanning subgraph $G' \subseteq G$ with the following properties:
\begin{itemize}
\item[(i)] $\ell_0 \leq \ell \leq M$, $|V_0|\leq \eps |G|$, $|V_1|=\ldots =|V_\ell|=:L$;
\item[(ii)] $d_{G'}(v)>d_G(v)-(d+\eps)|G|$ for all $v\in V(G)$;
\item[(iii)] $e(G'[V_i])=0$ for all $i\geq 1$;
\item[(iv)] for all $1\leq i < j \leq \ell$ the graph $(V_i,V_j)_{G'}$ is $\eps$-regular and has density either $0$ or greater than $d$.
\end{itemize}
\end{lemma}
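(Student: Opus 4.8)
The plan is to deduce this degree form from the classical statement of Szemer\'edi's Regularity lemma by passing to a spanning subgraph in which all `useless' edges have been removed, while sweeping a few atypical clusters and vertices into the exceptional set. First I would fix an auxiliary parameter $\eps'=\eps'(\eps)$ with $0<\eps'\ll\eps$ and apply the classical Regularity lemma to $G$ with regularity parameter $\eps'$ and with the lower bound on the number of parts taken to be $\ell_0'$, a suitable constant multiple of $\max\{\ell_0,\lceil 2/\eps\rceil\}$ (chosen large enough to survive the cluster deletions below). This yields a partition $V(G)=U_0\cup U_1\cup\dots\cup U_{\ell'}$ with $|U_0|\le\eps'|G|$, all $U_i$ ($i\ge1$) of a common size, and all but at most $\eps'\binom{\ell'}{2}$ of the pairs $(U_i,U_j)$ being $\eps'$-regular; setting $M(\eps,\ell_0):=M_{\mathrm{classical}}(\eps',\ell_0')$ then yields the bound $\ell\le M$ in~(i).

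Next I would build the final partition and the subgraph $G'$. Call a cluster $U_i$ \emph{bad} if it lies in more than $\sqrt{\eps'}\ell'$ pairs that fail to be $\eps'$-regular, and call a vertex $v\in U_i$ \emph{exceptional} for an $\eps'$-regular pair $(U_i,U_j)$ if $d_G(v,U_j)>(d_G(U_i,U_j)+\eps')|U_j|$ --- by $\eps'$-regularity this happens for at most $\eps'|U_i|$ vertices of $U_i$ --- and \emph{bad} if it is exceptional for more than $\sqrt{\eps'}\ell'$ pairs. A routine double-counting shows that at most a $\sqrt{\eps'}$-fraction of the clusters, and at most a $\sqrt{\eps'}$-fraction of all vertices, are bad. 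I would move every bad cluster and every bad vertex into $U_0$, remove in addition the few surviving clusters that still contain many bad vertices, and discard a tiny further fraction of vertices from each remaining cluster so that the survivors $V_1,\dots,V_\ell$ all have a common size; set $V_0$ to be $U_0$ together with everything moved or discarded. For $\eps'$ small this gives $|V_0|\le\eps|G|$, and the choice of $\ell_0'$ forces $\ell\ge\ell_0$. Finally, let $G'$ be obtained from $G$ by deleting every edge lying inside a final cluster $V_i$, and every edge joining $V_i\subseteq U_{i'}$ to $V_j\subseteq U_{j'}$ for which the original pair $(U_{i'},U_{j'})$ is not $\eps'$-regular or has density at most $d+\sqrt{\eps'}$; all remaining edges of $G$, in particular all edges incident to $V_0$, are retained.

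It then remains to verify (i)--(iv). Property~(iii) is immediate. For~(iv), a surviving pair $(V_i,V_j)$ is either edgeless in $G'$, hence trivially $\eps$-regular of density $0$, or it coincides in $G'$ with a subpair --- obtained by deleting an $O(\sqrt{\eps'})$-fraction of the vertices from each side --- of an $\eps'$-regular pair of density greater than $d+\sqrt{\eps'}$, and hence, by the standard robustness of $\eps'$-regularity under the removal of a small fraction of vertices, is $\eps$-regular of density greater than $d$. For~(ii), every vertex of $V_0$ keeps all of its edges, since all edges incident to $V_0$ were retained; a vertex $v$ of a surviving cluster $V_i\subseteq U_{i'}$ loses at most $|V_i|\le|G|/\ell_0$ edges inside its cluster, at most $\sqrt{\eps'}\ell'\cdot|V_i|\le\sqrt{\eps'}|G|$ edges to clusters forming irregular pairs with $U_{i'}$, and --- because $v$ is not exceptional --- at most roughly $(d+\sqrt{\eps'})|G|$ edges to clusters forming low-density pairs with $U_{i'}$, so for $\ell_0$ effectively at least $2/\eps$ and $\eps'$ small enough the total loss stays below $(d+\eps)|G|$. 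I expect the main obstacle to be precisely property~(ii): the naive estimates control only the \emph{average} degree loss within a cluster, so the real content is that $\eps'$-regularity of each individual pair pins down the number of neighbours of all but negligibly many vertices in each low-density class, and that this negligible set --- together with the few clusters meeting many irregular pairs --- is small enough to be absorbed into $V_0$ without violating $|V_0|\le\eps|G|$ or $\ell\ge\ell_0$.
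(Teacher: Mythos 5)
Your derivation is correct: the paper states this degree form without proof, merely noting that it can be easily derived from the classical version of Szemer\'edi's Regularity Lemma, and your argument is precisely that standard derivation (apply the classical lemma with $\eps'\ll\eps$ and a larger lower bound on the number of parts, delete edges inside clusters and edges of irregular or low-density pairs, and clean up the partition). In particular you correctly identify and resolve the only delicate point---that one needs per-vertex rather than average control of the degree loss---by moving the few vertices that are atypical for many pairs, and the few clusters lying in many irregular pairs, into $V_0$ while retaining all edges incident to $V_0$, which is permissible since conditions (ii)--(iv) place no constraints on those edges; the remaining verifications (slicing preserves $\eps$-regularity and keeps the density above $d$ because only pairs of density greater than $d+\sqrt{\eps'}$ are kept) are carried out correctly.
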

The sets $V_1,\ldots, V_\ell$ are called \emph{clusters}, $V_0$ is called the \emph{exceptional set} and the vertices in $V_0$ \emph{exceptional vertices}. We refer to $G'$ as the \emph{pure graph} of $G$. 
The \emph{reduced graph} $R$ of $G$ is the graph whose vertices are $V_1, \ldots , V_\ell$ and in which $V_i$ is adjacent to $V_j$ whenever $(V_i, V_j )_{G'}$ is $\eps$-regular and has density greater than $d$.

Next we see that given a regular pair we can approximate it by a super-regular pair. The following lemma can be found in e.g.~\cite{andrewregu2}.
\begin{lemma}\label{superregular}
If $(A,B)$ is an $\eps$-regular pair with density $d$ in a graph $G$ (where $0<\eps<1/3$), then there exists $A'\subseteq A$ and $B'\subseteq B$ with $|A'|\geq (1-\eps)|A|$ and $|B'|\geq (1-\eps) |B|$, such that $(A',B')$ is a $(2\eps, d-3\eps)$-super-regular pair.
\end{lemma}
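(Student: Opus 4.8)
The plan is to run the standard ``clean-up'' argument: delete from each side the small set of vertices whose degree deviates from the typical value, and then check that the leftover pair is super-regular. Concretely, I would let $A''\subseteq A$ be the set of vertices $a$ with $d_G(a,B)\le (d-\eps)|B|$, and $B''\subseteq B$ the set of vertices $b$ with $d_G(b,A)\le (d-\eps)|A|$. If $|A''|\ge \eps|A|$ then, applying $\eps$-regularity of $(A,B)$ to the sets $A''$ and $B$ (noting $|B|\ge \eps|B|$ trivially), we get $d_G(A'',B)>d-\eps$; but summing degrees gives $e_G(A'',B)\le |A''|(d-\eps)|B|$, i.e.\ $d_G(A'',B)\le d-\eps$, a contradiction. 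Hence $|A''|<\eps|A|$, and symmetrically $|B''|<\eps|B|$. (When $d<\eps$ the sets $A'',B''$ are simply empty.)

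Now set $A':=A\setminus A''$ and $B':=B\setminus B''$, so that $|A'|\ge(1-\eps)|A|$ and $|B'|\ge(1-\eps)|B|$, which already gives the required size bounds. It remains to verify that $(A',B')$ is $(2\eps,d-3\eps)$-super-regular. For the density condition, take any $X\subseteq A'$ and $Y\subseteq B'$ with $|X|\ge 2\eps|A'|$ and $|Y|\ge 2\eps|B'|$; since $\eps<1/3$ we have $2(1-\eps)\ge 1$, hence $|X|\ge 2\eps(1-\eps)|A|\ge \eps|A|$ and likewise $|Y|\ge \eps|B|$, so $\eps$-regularity of $(A,B)$ yields $d_G(X,Y)>d-\eps>d-3\eps$. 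For the degree condition, any $a\in A'$ satisfies $|N_G(a)\cap B'|\ge d_G(a,B)-|B''|>(d-\eps)|B|-\eps|B|=(d-2\eps)|B|\ge (d-2\eps)|B'|>(d-3\eps)|B'|$ (the last inequality is immediate, and in the case $d-3\eps<0$ the degree condition is vacuous anyway); the bound for vertices of $B'$ is symmetric.

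I do not expect a genuine obstacle here; this is a routine lemma. The only care needed is in the bookkeeping of the deviation parameters: one must throw away only the vertices that are off by the full $\eps$ (so that the discarded sets stay below $\eps|A|$, $\eps|B|$), the factor-of-two slack in the ``$2\eps$'' of super-regularity is exactly what is consumed in passing from $|X|\ge 2\eps|A'|$ to $|X|\ge\eps|A|$, and the relaxation from density $d-\eps$ (what one gets) to $d-3\eps$ (what is claimed) comfortably absorbs both the regularity error and the vertices deleted from the opposite class.
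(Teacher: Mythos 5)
Your proof is correct. The paper offers no proof of this lemma at all --- it simply cites \cite{andrewregu2} --- and your argument (discarding the fewer than $\eps|A|$ and $\eps|B|$ vertices of atypically low degree, then rechecking the density and degree conditions, with the degenerate case $d<2\eps$ correctly dismissed because $d-3\eps<0$ makes the degree condition vacuous) is exactly the standard argument behind that citation.
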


\section{Some useful results}\label{extreme}
\subsection{Almost perfect star tilings}
An important result of Koml\'os~\cite{komlos} determines the minimum degree threshold
that forces an \emph{almost} perfect $H$-tiling in a graph, for any fixed graph $H$.
As hinted at in the proof overview, we require that the reduced graph $R$ of $G$ contains an almost perfect tiling of stars. The following special case of Koml\'os' theorem ensures such a tiling exists.

\begin{thm}[\label{komlos}Koml\'{o}s~\cite{komlos}]
Given any $t\in \mathbb N$ and $\eps>0$, there is an integer $n_0=n_0(t,\eps)$ so that, if $n\geq n_0$ and  $G$ is a graph on $n$ vertices with
 $$\delta(G)\geq\frac{n}{(t+1)},$$ 
then $G$ contains a $K_{1,t}$-tiling that covers all but at most $\eps n$ vertices.
\end{thm}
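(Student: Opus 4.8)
The plan is to derive this special case from Komlós' general theorem on almost-perfect $H$-tilings, so the only real work is to compute the relevant parameter of $H = K_{1,t}$ and check that it matches the stated degree bound $n/(t+1)$. Recall that Komlós' theorem states: for every graph $H$ and every $\eps > 0$, if $G$ has $n$ vertices with $\delta(G) \geq \left(1 - \frac{1}{\chi_{cr}(H)}\right) n$ (for $n$ large), then $G$ contains an $H$-tiling covering all but at most $\eps n$ vertices, where $\chi_{cr}(H)$ is the \emph{critical chromatic number} of $H$, defined by $\chi_{cr}(H) := \frac{(\chi(H)-1)|H|}{|H| - \sigma(H)}$ with $\sigma(H)$ the minimum size of a colour class over all proper $\chi(H)$-colourings of $H$. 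So the task reduces to computing $\chi_{cr}(K_{1,t})$ and verifying that $1 - \frac{1}{\chi_{cr}(K_{1,t})} = \frac{1}{t+1}$.

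First I would record the basic parameters of the star $H = K_{1,t}$: it has $|H| = t+1$ vertices, it is bipartite so $\chi(H) = 2$, and in the (essentially unique) proper $2$-colouring the two colour classes have sizes $1$ and $t$, so $\sigma(H) = 1$. Plugging into the definition gives
\[
\chi_{cr}(K_{1,t}) = \frac{(2-1)(t+1)}{(t+1) - 1} = \frac{t+1}{t}.
\]
Hence $1 - \frac{1}{\chi_{cr}(K_{1,t})} = 1 - \frac{t}{t+1} = \frac{1}{t+1}$, which is exactly the degree threshold in the statement. Therefore, applying Komlós' general theorem to $H = K_{1,t}$ with the given $\eps$ yields the required $n_0 = n_0(t,\eps)$ and the conclusion that any $n$-vertex $G$ with $n \geq n_0$ and $\delta(G) \geq n/(t+1)$ contains a $K_{1,t}$-tiling covering all but at most $\eps n$ vertices.

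The only point requiring a little care — and the step I would expect to be the main (minor) obstacle — is making sure the version of Komlós' theorem being invoked is stated with the hypothesis in the right form. Komlós' paper phrases the result as: the minimum degree that guarantees an $H$-tiling missing at most $\eps n$ vertices is $\left(1 - \frac{1}{\chi_{cr}(H)}\right)n + o(n)$, and one should confirm that for our purposes the $o(n)$ error can be absorbed (e.g.\ by noting that $\delta(G) \geq n/(t+1)$ is already at the threshold, and if one wants strict comfort one can instead invoke the theorem with $\delta(G) \geq n/(t+1)$ and a slightly larger tolerance $\eps' < \eps$, or observe that the $o(n)$ slack only helps since a deficiency of $o(n)$ vertices can be folded into the $\eps n$ allowance). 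In any case, no new argument is needed beyond citing~\cite{komlos} and performing the one-line computation of $\chi_{cr}(K_{1,t})$ above. \qed
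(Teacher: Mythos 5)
Your proposal is correct and matches the paper's approach: the paper simply quotes this statement as a special case of Koml\'os' general tiling theorem from~\cite{komlos}, and the only content needed is precisely your computation that $\chi_{cr}(K_{1,t})=\frac{t+1}{t}$, so that $\bigl(1-\frac{1}{\chi_{cr}(K_{1,t})}\bigr)n=\frac{n}{t+1}$. Your worry about an $o(n)$ error term is harmless but unnecessary, since Koml\'os' theorem is stated exactly at the threshold $\delta(G)\geq\bigl(1-\frac{1}{\chi_{cr}(H)}\bigr)n$ for $n$ sufficiently large.
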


\subsection{Embedding $H$ in random graphs}
Let $\eta>0$. Given an $n$-vertex graph $G$ and a graph $H$ we  write  $G\in F_H(\eta)$ if every induced subgraph of $G$ on at least $\eta n$ vertices contains $H$ as a (not necessarily induced) subgraph. 
The next theorem will allow us to find almost perfect $H$-tilings in large subgraphs of $G_{n,p}$.
\begin{thm}[\label{jansonthm1}\cite{jlr}, part of Theorem $4.9$]
For every graph $H$ with at least one edge and for every $\eta>0$ there is a positive constant $C=C(H,\eta)$ such that if $p\geq Cn^{-1/d^*(H)}$,
$$\lim_{n\rightarrow \infty} \mathbb{P}(G_{n,p}\in F_H(\eta))=1.$$
\end{thm}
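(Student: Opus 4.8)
The plan is to prove the stronger statement that with high probability \emph{every} set $S\subseteq[n]$ with $|S|\geq\eta n$ induces a copy of $H$ in $G_{n,p}$, via a union bound over all such $S$. Since an induced subgraph on a larger vertex set contains the induced subgraph on each of its subsets, it suffices to consider the sets $S$ of size exactly $m:=\lceil\eta n\rceil$, of which there are at most $2^n$. So the core task is to show that for a \emph{fixed} such $S$ one has $\mathbb{P}(G_{n,p}[S]\text{ contains no copy of }H)\leq\exp(-a_0 C n)$ for a constant $a_0=a_0(H,\eta)>0$; then choosing $C$ large enough that $a_0 C>\ln 2$ finishes the argument.

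To bound this probability I would apply Janson's inequality to the random variable $X=X_S$ counting copies of $H$ in $G_{n,p}[S]$. Its mean is $\mu:=\mathbb{E}X=\Theta\bigl(m^{v(H)}p^{e(H)}\bigr)$, and since $e(H)/(v(H)-1)\leq d^*(H)$ by the very definition of $d^*(H)$, we have $v(H)-e(H)/d^*(H)\geq 1$; together with $p\geq Cn^{-1/d^*(H)}$ this gives $\mu\geq a_1 C^{e(H)}n\geq a_1 Cn$ for a constant $a_1=a_1(H,\eta)>0$ (and $\mu$ is genuinely superlinear in $n$ unless $H$ itself attains the maximum defining $d^*(H)$). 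For the dependency term, recall $\Delta=\sum\mathbb{E}[I_A I_B]$, summed over ordered pairs of distinct potential copies $A,B$ of $H$ in $S$ sharing at least one edge. Grouping such pairs by the isomorphism type of the shared subgraph $J=(V(A)\cap V(B),\,E(A)\cap E(B))$ — which is a subgraph of $H$ with $e(J)\geq 1$, hence $v(J)\geq 2$ — each type contributes $\Theta\bigl(m^{2v(H)-v(J)}p^{2e(H)-e(J)}\bigr)$, so $\Delta=O\bigl(\max_J m^{2v(H)-v(J)}p^{2e(H)-e(J)}\bigr)$ and therefore $\mu^2/\Delta=\Omega\bigl(\min_J m^{v(J)}p^{e(J)}\bigr)$. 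The crucial point is that for every such $J$ we again have $e(J)/(v(J)-1)\leq d^*(H)$ — exactly the definition of $d^*$ — so $v(J)-e(J)/d^*(H)\geq 1$ and hence $m^{v(J)}p^{e(J)}=\Omega\bigl(C^{e(J)}n\bigr)=\Omega(Cn)$. Combining with the standard bound $\mathbb{P}(X_S=0)\leq\exp\bigl(-\tfrac12\min(\mu,\,\mu^2/\Delta)\bigr)$ from Janson's inequality yields $\mathbb{P}(X_S=0)\leq\exp(-a_0 Cn)$ as required, and the union bound $\mathbb{P}(G_{n,p}\notin F_H(\eta))\leq 2^n\exp(-a_0Cn)=\exp\bigl((\ln 2-a_0C)n\bigr)\to 0$ completes the proof.

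I expect the main obstacle to be the estimate of Janson's $\Delta$: one has to organise the various ways two copies of $H$ can overlap and verify that each contributes a term comfortably dominated by $\mu^2/n$. The real content there is simply the recognition that $d^*(H)$ is calibrated precisely so that every possible overlap graph $J$ (in particular every subgraph of $H$ with at least one edge) satisfies $m^{v(J)}p^{e(J)}=\Omega(Cn)$; once this is in hand the remainder is routine. A secondary point needing a little care is checking that $\mu$ itself — not merely $\mu^2/\Delta$ — grows linearly in $C$, since the exponent $\tfrac12\min(\mu,\mu^2/\Delta)$ must beat the entropy term $n\ln 2$ produced by summing over the (exponentially many) sets $S$.
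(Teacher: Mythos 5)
Your proposal is correct and follows essentially the same route as the paper, which obtains the statement by applying the exponential bound of Theorem 3.9 of \cite{jlr} (namely $\mathbb{P}(H\not\subseteq G_{m,p})\leq \exp\bigl(-c\min_{J\subseteq H,\,e(J)\geq 1} m^{v(J)}p^{e(J)}\bigr)$) to each vertex subset of size $\lceil\eta n\rceil$ and taking a union bound over the at most $2^n$ such sets, with $C$ chosen large enough to beat the entropy term. Your Janson-inequality calculation, grouping overlapping pairs by the shared subgraph $J$ and using that $e(J)/(v(J)-1)\leq d^*(H)$, is precisely an inline re-derivation of that cited bound, so the two arguments coincide in substance.
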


Let $\gamma>0$ and $H$ be a graph.
Consider a graph $G$ with vertex set $[n]$. Let $\mathcal H_n$ be a collection of  copies of $H$ in $K_n$ (where we view $K_n$ to have vertex set $[n]$).
We  write  $G\in F_H(\gamma,  \mathcal H_n)$ if every induced subgraph of $G$ on at least $\gamma n$ vertices contains a copy of $H$ that is not an element of $\mathcal H_n$. 
The next theorem can be proven in the same way as Theorem~\ref{jansonthm1} so we omit the proof. It allows us to find copies of $H$ in $G_{n,p}$ that avoid certain edge sets.
\begin{thm}\label{jansonthm4}
Let $H$ be a graph with at least one edge and suppose $0<\gamma _1 \ll \gamma _2 \ll 1/|H|$. Then there is a positive constant 
$D=D(H,\gamma _1,\gamma _2)$ such that the following holds.
Let $\mathcal H_n$ be a collection of at most $\gamma _1 \binom{n}{|H|}$ copies of $H$ in $K_n$. If $p\geq Dn^{-1/d^*(H)}$, then
$$\lim_{n\rightarrow \infty} \mathbb{P}(G_{n,p}\in  F_H(\gamma_2, \mathcal H_n))=1.$$
\end{thm}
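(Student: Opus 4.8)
The plan is to follow the proof of Theorem~\ref{jansonthm1} (the relevant part of Theorem~4.9 in~\cite{jlr}) almost verbatim, the only modification being that we delete from consideration the copies belonging to $\mathcal{H}_n$ before running the Janson-inequality argument, and observe that this deletion is harmless because $\mathcal{H}_n$ is tiny. Since the event $\{G\in F_H(\gamma_2,\mathcal{H}_n)\}$ is monotone decreasing under enlarging the tested vertex set, it suffices to prove the statement for all $W\subseteq[n]$ with $|W|=\gamma_2 n$; and since there are at most $\binom{n}{\gamma_2 n}\le 2^n$ such sets, it is enough to show that for each fixed such $W$,
\[
\mathbb{P}\big(G_{n,p}[W]\ \text{contains no copy of}\ H\ \text{lying outside}\ \mathcal{H}_n\big)\le e^{-cn}
\]
for some constant $c>\ln 2$, and then take a union bound.

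Fix $W$ with $|W|=\gamma_2 n$ and let $\mathcal{C}$ be the family of all copies of $H$ in $K_n$ whose vertex set is contained in $W$ and which are not members of $\mathcal{H}_n$. The number of copies of $H$ spanned by $W$ in $K_n$ is a positive $H$-dependent constant times $|W|^{|H|}=\gamma_2^{|H|}n^{|H|}$, whereas $|\mathcal{H}_n|\le\gamma_1\binom{n}{|H|}=\Theta_H(\gamma_1 n^{|H|})$; since $\gamma_1\ll\gamma_2\ll 1/|H|$ we may assume $\gamma_1$ is small enough compared with $\gamma_2^{|H|}$ that $|\mathcal{C}|\ge c_H\,\gamma_2^{|H|}n^{|H|}$ for some $c_H=c_H(H)>0$ and all large $n$. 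Let $X$ count the copies in $\mathcal{C}$ present in $G_{n,p}$, so that $\mu:=\mathbb{E}X=|\mathcal{C}|\,p^{e(H)}$, and let $\Delta:=\sum_{F\sim F'}\mathbb{P}(F,F'\subseteq G_{n,p})$ be the usual sum over ordered pairs of distinct members of $\mathcal{C}$ sharing an edge. Bounding $\Delta$ from above by the same sum taken over \emph{all} copies of $H$ supported on $W$, and splitting according to the isomorphism type $H'$ of the overlap, gives the standard estimate
\[
\Delta\ \le\ \Big(\sum_{H'\subseteq H,\; e(H')\ge 1}\Theta_H\big(|W|^{-|H'|}\,p^{-e(H')}\big)\Big)\,\mu^2 .
\]

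The numerical heart of the matter is that $p\ge Dn^{-1/d^*(H)}$ forces $|W|^{|H'|}p^{e(H')}\ \ge\ \gamma_2^{|H|}D^{e(H')}n^{\,|H'|-e(H')/d^*(H)}\ \ge\ \gamma_2^{|H|}D\,n$ for every $H'\subseteq H$ with an edge, because $e(H')/d^*(H)\le|H'|-1$ by the definition of $d^*(H)$; the same computation with $H'=H$ gives $\mu\ge c_H\,\gamma_2^{|H|}D\,n$. Hence $\Delta\le O_H\!\big(1/(\gamma_2^{|H|}Dn)\big)\,\mu^2$, and Janson's inequality (in the standard combined form $\mathbb{P}(X=0)\le\exp(-\Omega(\mu^2/(\mu+\Delta)))$) yields $\mathbb{P}(X=0)\le\exp(-\Omega_H(\gamma_2^{|H|}D)\,n)$. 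Choosing $D=D(H,\gamma_1,\gamma_2)$ large enough makes the implied constant exceed $\ln 2$, so that $e^{-cn}\,2^n\to 0$, and the union bound over the $\le 2^n$ choices of $W$ completes the argument.

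I expect the only points needing genuine care to be the bookkeeping for $\Delta$ and, relatedly, the verification that every implied constant depends only on $H$ (and, where unavoidable, on $\gamma_2$) so that the final choice of $D$ is legitimate; the rest is monotonicity, a crude union bound, and the verbatim second-moment calculation from~\cite{jlr}. It is worth flagging explicitly that the lower bound $|\mathcal{C}|\ge c_H\,\gamma_2^{|H|}n^{|H|}$ — i.e.\ the fact that discarding $\mathcal{H}_n$ costs at most a constant factor — is exactly where the hypothesis $\gamma_1\ll\gamma_2$ (rather than merely $\gamma_1<\gamma_2$) enters.
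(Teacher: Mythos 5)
Your proposal is correct and is essentially the argument the paper has in mind: the paper omits the proof, saying one should repeat the proof of Theorem~\ref{jansonthm1} (a union bound over vertex subsets combined with the exponential bound of Theorem~3.9 of~\cite{jlr}) with the copies in $\mathcal H_n$ excluded, and your write-up simply carries out that Janson-inequality computation explicitly, correctly using $\gamma_1\ll\gamma_2$ to see that deleting $\mathcal H_n$ removes only a small fraction of the copies inside any set of size $\gamma_2 n$, and choosing $D$ large so the failure probability beats the $2^n$ union bound.
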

We remark that the proof of Theorem~\ref{jansonthm1} is just a simple  application of Theorem 3.9 from~\cite{jlr}. To prove Theorem~\ref{jansonthm4} one can follow precisely the same proof, however, by instead applying a version of Theorem 3.9 from~\cite{jlr}
in the setting where now some copies of $H$ are excluded; again to prove such a result one follows the proof of Theorem 3.9 from~\cite{jlr} precisely.

Similarly to the above, given an $n$-vertex graph $G$ and a graph $H$ we write  $G\in F'_H(\eta)$ if for all ordered pairs of disjoint sets $A,B\subset V(G)$ of size $|A|,|B|\geq \eta n$ there exists a copy of $H$ in $G$ with precisely one vertex in $A$ and $|H|-1$ vertices in $B$. Again, the same argument as in the proof of Theorem~\ref{jansonthm1} shows the following.
\begin{thm}\label{jansonthm2}
For every graph $H$ with at least one edge and for every $\eta>0$ there is a positive constant $C=C(H,\eta)$ such that if $p\geq Cn^{-1/d^*(H)}$,
$$\lim_{n\rightarrow \infty} \mathbb{P}(G_{n,p}\in F'_H(\eta))=1.$$
\end{thm}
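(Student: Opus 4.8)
The plan is to mimic the proof of Theorem~\ref{jansonthm1}, replacing the ``every induced subgraph on $\geq\eta n$ vertices'' condition by the ``every ordered pair $(A,B)$'' condition, applying a Janson-type inequality (Theorem~3.9 of~\cite{jlr}) to a single pair, and then taking a union bound. Fix $H$ with $e(H)\geq 1$ and $\eta>0$, fix an arbitrary distinguished vertex $v\in V(H)$, and write $h:=|H|$. For a fixed pair of disjoint sets $A,B\subseteq[n]$ with $|A|,|B|\geq\eta n$, let $X=X_{A,B}$ be the number of copies of $H$ in $G_{n,p}$ in which the image of $v$ lies in $A$ and the remaining $h-1$ vertices lie in $B$. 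The goal is to show $\mathbb{P}(X=0)$ is exponentially small in $n$, uniformly over all such pairs; since there are at most $2^n\cdot 2^n=4^n$ choices of $(A,B)$, a union bound then finishes the proof provided the exponential decay rate is chosen (via $C$) to beat $4^n$.

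First I would estimate $\mu:=\mathbb{E}[X]$. As $|A|,|B|=\Theta(n)$, there is $c_1=c_1(H,\eta)>0$ with $\mu\geq c_1 n^{h}p^{e(H)}$. Taking $H'=H$ in the definition of $d^*$ gives $d^*(H)\geq e(H)/(h-1)$, hence $h-e(H)/d^*(H)\geq 1$, so for $p\geq Cn^{-1/d^*(H)}$ (with $C\geq 1$) we get $\mu\geq c_1 C^{e(H)} n^{\,h-e(H)/d^*(H)}\geq c_1 n$; in particular $\mu\to\infty$. Next I would bound $\bar\Delta:=\sum\mathbb{P}(\text{both copies present})$, the sum over ordered pairs of potential copies of the above type sharing at least one edge. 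Grouping such pairs by the isomorphism type $H'$ of their intersection, the standard computation gives $\bar\Delta\leq c_2\sum_{H'\subseteq H,\ e(H')\geq 1}n^{2h-|H'|}p^{2e(H)-e(H')}$ for some $c_2=c_2(H)$, and therefore $\mu^2/\bar\Delta\geq c_3\min_{H'\subseteq H,\,e(H')\geq 1}n^{|H'|}p^{e(H')}$. For every $H'\subseteq H$ with $e(H')\geq 1$ we have $|H'|\geq 2$ and $d^*(H)\geq e(H')/(|H'|-1)$, whence $|H'|-e(H')/d^*(H)\geq 1$ and so $n^{|H'|}p^{e(H')}\geq C^{e(H')}n\geq Cn$. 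Thus Janson's inequality (Theorem~3.9 of~\cite{jlr}) yields $\mathbb{P}(X=0)\leq\exp\!\big(-\mu^2/(2\bar\Delta)\big)\leq\exp(-c_4 C n)$ for some $c_4=c_4(H,\eta)>0$.

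Finally, the union bound over all at most $4^n$ ordered pairs $(A,B)$ gives $\mathbb{P}\big(G_{n,p}\notin F'_H(\eta)\big)\leq 4^n\exp(-c_4 C n)$, which tends to $0$ as soon as $C=C(H,\eta)$ is fixed large enough that $c_4 C>2\ln 2$; this is the constant in the statement. I expect there to be no genuine obstacle here — the argument is line-for-line the same as for Theorem~\ref{jansonthm1}, the only difference being the trivial split of the embedded copy between $A$ and $B$, which changes only the multiplicative constants since $|A|,|B|=\Theta(n)$. The one point that deserves care is the estimate of $\bar\Delta$ and the verification that $\min_{H'}n^{|H'|}p^{e(H')}=\Omega(Cn)$: this is precisely where the exponent $d^*(H)$ (rather than a smaller density parameter) is essential, because the per-pair failure probability must decay fast enough to survive the $4^n$ factor coming from ranging over all pairs $(A,B)$.
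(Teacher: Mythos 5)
Your proposal is correct and takes essentially the same route as the paper: the paper deduces Theorem~\ref{jansonthm2} by ``the same argument as Theorem~\ref{jansonthm1}'', namely an exponential lower-tail (Janson-type) bound for the constrained copies of $H$ (a partite adaptation of Theorem 3.9 of~\cite{jlr}) followed by a union bound over the exponentially many pairs $(A,B)$, with $|H'|-e(H')/d^*(H)\geq 1$ guaranteeing the per-pair failure probability $\exp(-\Omega(Cn))$ beats the $4^n$ factor. You merely carry out the Janson computation explicitly rather than citing the adapted version of Theorem 3.9, which is not a substantive difference.
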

\COMMENT{The proof of Theorem~\ref{jansonthm1} is just a simple application of Theorem 3.9 from the Random graphs book~\cite{jlr}. So it suffices to prove an
analogous version of Theorem 3.9 but e.g.~in the $|H|$-partite setting (i.e.~asking for the probability that the random $|H|$-partite graph $G_{n,|H|,p}$ with n vertices in each class has no copy of $H$ in it). Such a version of Thm 3.9 follows in the same way as the original proof.}

\section{Proof of Theorem~\ref{mainthm}}\label{secproof}
Let $H$ and $\alpha >0$ be as in the statement of the theorem.
Note that it suffices to prove the theorem in the case when $\alpha \ll 1/|H|$.
Define additional constants $\phi,\ell_0,\eps,  \eps _1,\eps_2, \eps _3,\eps_4,\eps_5, d_1, d$ and apply the Regularity lemma (Lemma~\ref{regu}) with input $\eps, \ell _0$ to
obtain  $M=M(\eps,\ell_0)$ and define $\eta>0$ such that $\eta \ll 1/M$, so that we have

\begin{align}\label{hier}
0<\eta  \ll 1/M  \leq 1/\ell_0 \ll\eps \ll \eps_1 \ll \eps_2 \ll \eps_3 \ll \eps_4 \ll \phi \ll \eps_5  \ll d_1 \ll d\ll \alpha\ll 1/|H|.
\end{align}
Set $$t := \lceil 2\alpha^{-1}\rceil$$
and let $H'$ be some fixed induced subgraph of $H$ on $|H|-1$ vertices. 
Let $c=c(\eta, \eps _2, \eps_3, M,t,H)=c(\alpha,H)$ be a positive constant such that: (i) on input $H$, $\eta$,
the conclusion of both Theorem~\ref{jansonthm1} and Theorem~\ref{jansonthm2} hold with $c/4$ playing the
role of $C$ and; (ii) $c\geq 4D(4Mt)^{1/d^*(H)}$ where $D$ is the output of Theorem~\ref{jansonthm4}  
on input $H'$, $\eps_2$ and $\eps_3$.\COMMENT{condition really needed}

Let $G$ be a sufficiently large $n$-vertex graph with $n$ divisible by $|H|$ and
$\delta (G) \geq \alpha n$. 
Set
$$p:=cn^{-1/d^*(H)}.$$
 We wish to show that $G\cup G_{n,p}$ a.a.s.~contains a perfect $H$-tiling. Our first step towards proving this will be to switch our attention to an appropriate subgraph of $G$.

\begin{claim}\label{claim1}
There is a $k \in\mathbb N$,  a partition of $V(G)$ into classes $V_0$ and $W_{i,j}$ (for all 
$0 \leq i \leq t$ and $1\leq j \leq k$), and a spanning subgraph $G'$ of $G$
 such that: 
\begin{itemize}
\item[(i)] $\frac{\ell_0}{2t}  \leq  k 
\leq  \frac{M}{t+1}$;
\item[(ii)] $|V_0|\leq 2t\eps n$;
\item[(iii)] Each cluster $W_{i,j}$ has the same size $L \in \mathbb N$
where $n/2M\leq L \leq n/\ell_0$;
\item[(iv)] For all $1\leq i \leq t$, $1\leq j \leq k$, the pair $(W_{0,j},W_{i,j})_{G'}$ is $(4\eps,d/2)$-super-regular;
\item[(v)] $\delta (G') \geq (\alpha -2d)n$.
\end{itemize}
\end{claim}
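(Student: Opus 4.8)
The plan is to derive this structure by a two-step application of the Regularity lemma followed by Koml\'os' almost-perfect star-tiling theorem. First I would apply the degree form of the Regularity lemma (Lemma~\ref{regu}) to $G$ with parameters $\eps$, $\ell_0$ and density parameter $d$, obtaining the exceptional set $V_0'$, clusters $V_1,\dots,V_\ell$ of common size $L$, the pure graph $G_0'\subseteq G$, and the reduced graph $R$ on $\ell$ vertices. By property (ii) of Lemma~\ref{regu} together with $\delta(G)\geq \alpha n$ and $\eps,d\ll\alpha$, every vertex keeps degree at least $(\alpha-d-\eps)n\geq (\alpha-2d)n$ in $G_0'$; moreover a standard averaging argument shows that $R$ has minimum degree at least $(\alpha-2d)\ell \geq \frac{\ell}{t+1}$, using $t=\lceil 2\alpha^{-1}\rceil$ so that $\frac{1}{t+1}\leq \frac{\alpha}{2}<\alpha-2d$. (One must be slightly careful: the bound on $\delta(R)$ comes from the fact that if $V_i$ were adjacent in $R$ to fewer than $(\alpha-2d)\ell$ clusters, then a vertex of $V_i$ could have at most $(\alpha-2d)\ell\cdot L + \eps n + L \cdot(\text{density} \le 1)$-ish many neighbours, contradicting its degree in $G_0'$; the $\eps n$ absorbs $V_0'$ and the density-$0$ or low-density pairs.)

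Next I would invoke Koml\'os' theorem (Theorem~\ref{komlos}) applied to $R$ with parameter $t$ (and a suitably small $\eps$-type constant, say $\eps_1$): since $\delta(R)\geq \frac{\ell}{t+1}$, we obtain a $K_{1,t}$-tiling of $R$ covering all but at most $\eps_1\ell$ of its vertices. Let $k$ be the number of stars in this tiling; each star uses $t+1$ vertices of $R$, so $k\leq \ell/(t+1)\leq M/(t+1)$, and $k\geq (1-\eps_1)\ell/(t+1)\geq \ell_0/(2t)$ for $\ell\geq \ell_0$ and $\eps_1$ small, giving (i). For the $j$-th star, write $W^{\mathrm{old}}_{0,j}$ for the cluster playing the role of the centre and $W^{\mathrm{old}}_{1,j},\dots,W^{\mathrm{old}}_{t,j}$ for the leaves; by definition of $R$ each pair $(W^{\mathrm{old}}_{0,j},W^{\mathrm{old}}_{i,j})_{G_0'}$ is $\eps$-regular with density greater than $d$. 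Finally I would apply Lemma~\ref{superregular} to each such pair to pass to subsets making it $(2\eps,d-3\eps)$-super-regular, but there is a subtlety: the centre cluster $W^{\mathrm{old}}_{0,j}$ appears in $t$ different pairs simultaneously, and shrinking it for one pair might break super-regularity for another. The clean fix is to apply the lemma pair-by-pair but only ever delete vertices, tracking that after removing at most $\eps|W^{\mathrm{old}}_{0,j}|$ vertices for each of the $t$ leaves — so at most $t\eps L$ vertices total from the centre, and at most $\eps L$ from each leaf — each pair remains super-regular with slightly worse constants; since $t\eps\ll 1$ and our hierarchy has $\eps\ll 4\eps$ with lots of room, the pairs end up $(4\eps,d/2)$-super-regular as required in (iv). (Alternatively one restricts all clusters to a common size and re-runs the super-regularity argument once; either way the loss is at most an $O(t\eps)$ fraction.)

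To finish, set $L$ to be the common size of the trimmed clusters (shrinking every $W_{i,j}$ to the size of the smallest one); then $n/2M\leq L\leq n/\ell_0$ follows from $\ell\leq M$ giving the original $L\geq n/(2M)$ roughly, and from $\ell\geq\ell_0$ giving $L\leq n/\ell_0$, with the $\eps$-fraction trimming absorbed by the constant factor $2$ — this gives (iii). Put into $V_0$ everything not in one of the $(t+1)k$ trimmed clusters: this consists of the original exceptional set $V_0'$ (size $\leq\eps n$), the clusters not covered by the star-tiling (at most $\eps_1\ell$ clusters, total size $\leq \eps_1 n$), and the vertices trimmed for super-regularity (at most $t\eps L$ per star-centre plus $\eps L$ per leaf, summed over $k\leq M/(t+1)$ stars, which is $O(t\eps n)$). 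All of these together are comfortably at most $2t\eps n$ given $\eps_1\ll\eps$... actually here one should check the bookkeeping carefully: the dominant term is the $\eps_1\ell\cdot L\leq \eps_1 n$ from uncovered clusters plus $\eps n$ from $V_0'$, both $\ll 2t\eps n$ since $t\geq 1$ and $\eps_1\ll\eps$, while the trimming loss $t\eps\cdot(t+1)k\cdot L/(t+1)\cdot(t+1) = O(t\eps n)$ stays below $2t\eps n$ with the constant $2$ giving the needed slack; this yields (ii). Property (v) is immediate from the degree bound on $G_0'$ noted above, taking $G'$ to be $G_0'$.

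\medskip

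The main obstacle I anticipate is the simultaneous super-regularisation of the $t$ pairs sharing a centre cluster: naive application of Lemma~\ref{superregular} independently to each pair is inconsistent because it would prescribe $t$ different subsets of the centre. Handling this correctly — either by a careful "delete-only" argument bounding the cumulative loss by $t\eps L$, or by re-proving super-regularity directly for the common trimmed clusters — is the one genuinely non-mechanical step; everything else (the degree transfer through Lemma~\ref{regu}(ii), the $\delta(R)$ estimate, the choice $t=\lceil 2\alpha^{-1}\rceil$ making Koml\'os applicable, and the size bookkeeping for $V_0$ and $L$) is routine given the hierarchy~\eqref{hier}.
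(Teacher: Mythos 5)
Your proposal is correct and follows essentially the same route as the paper: apply the degree form of the Regularity lemma, transfer the minimum degree to the reduced graph, apply Koml\'os' theorem to get an almost-perfect $K_{1,t}$-tiling, and then super-regularise each star by repeated (delete-only) applications of Lemma~\ref{superregular}, losing at most $\eps t L'$ vertices per cluster and absorbing everything else into $V_0$ -- which is exactly how the paper handles the shared-centre subtlety you flag. The only cosmetic point is your reuse of the name $\eps_1$ for the Koml\'os error parameter (in the paper's hierarchy $\eps \ll \eps_1$); simply using $\eps$ there, as the paper does, avoids both the notation clash and any worry about the order in which constants are fixed.
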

\begin{proof}
Apply the Regularity lemma (Lemma~\ref{regu}) to $G$ with parameters $\eps, d$ and $\ell_0$ to obtain a partition $V_0,V_1,\ldots,V_\ell$ of $V(G)$, pure graph $G'$ of $G$ and the reduced graph $R$ of $G$. 
So $|V_0|\leq \eps n$ and $(1-\eps)n/\ell \leq L':=|V_i|=|V_j|\leq n/\ell$ for all $i ,j\geq 1$.
It is a well-known fact that the reduced graph $R$ of $G$ `almost' inherits the minimum degree of $G$ (see e.g.~\cite[Lemma~3.7]{andrewregu2}). In particular,
since $\delta(G)\geq \alpha n$ and $\eps \ll d \ll \alpha$ we have that 
$$\delta(R)\geq  \frac{\alpha\ell}{2}\geq \frac{\ell}{t+1} .$$
By (\ref{hier}), $\ell \geq \ell _0$ is sufficiently large compared to $1/\eps$ and
$t$.\COMMENT{AT: important... needed to apply Komlos' theorem}
Thus,
 Theorem~\ref{komlos} implies that $R$ contains a $K_{1,t}$-tiling $\mathcal K$ that covers all but at most $\eps \ell$ vertices of $R$.
Let $k$ denote the size of $\mathcal K$. Hence,
$$\frac{\ell_0}{2t}  \leq \frac{(1-\eps)\ell}{t+1} \leq k \leq \frac{\ell}{t+1}
\leq  \frac{M}{t+1}.$$
Move to $V_0$ all those vertices that lie in clusters uncovered by $\mathcal K$.
Hence
$$|V_0|\leq \eps n+ \eps \ell \times L' 
\leq \eps n+ \eps \ell \times \frac{n}{\ell} =2\eps n.$$

Consider a copy $K'_{1,t}$ of $K_{1,t}$ in $\mathcal K$. In $G'$, this copy of $K_{1,t}$ corresponds to a collection of $t+1$ clusters $V_{i_0},V_{i_1},\ldots,V_{i_t}$ such that for all $1\leq j \leq t$ the pair $(V_{i_0},V_{i_j})_{G'}$ is $\eps$-regular and has density greater than $d$. 
By repeatedly applying Lemma~\ref{superregular} it is easy to check that we can
remove precisely $\eps t|V_{i_j}|=\eps t L'$ vertices from $V_{i_j}$ (for each $0 \leq j \leq 
t$) so that now $(V_{i_0},V_{i_j})_{G'}$ is $(4\eps,d/2)$-super-regular 
for each $1\leq j \leq t$. Add the vertices removed from these clusters into $V_0$.
Repeat this process for all copies of $K_{1,t}$ in $\mathcal K$.
Thus, now
$$|V_0|\leq 2\eps n+ \eps t L' \ell \leq 2 t\eps  n. $$
Given the $j$th copy of $K_{1,t}$ in $\mathcal K$ we relabel the clusters
so that the root of this $K_{1,t}$ is $W_{0,j}$ and the leaves are $W_{1,j},\dots
, W_{t,j}$. 
Hence  (iv) holds. Further, each of the clusters $W_{i,j}$ has the same size $L\in \mathbb N$ where
$$\frac{n}{2M} \leq \frac{(1-\eps)(1-\eps t)n}{\ell} \leq L =(1-\eps t)L'
\leq \frac{n}{\ell}
 \leq \frac{n}{\ell_0},$$
so (iii) holds. Note that (v) holds by Lemma~\ref{regu}(ii) and as $\eps \ll d$.
\end{proof}

So far we have only used the deterministic edges, i.e.~edges in $G$, but recall that we are aiming to find a perfect $H$-tiling in $G\cup G_{n,p}$. Next we will use these random edges to find copies of $H':=H-x$ (for some $x\in V(H)$) in the neighbourhood of vertices in $V_0$. To simplify the later calculations we now use the standard trick of decomposing the random edges into a few `buckets'. That is, let $G_1,G_2,G_3,G_4$ be independently chosen elements of $G_{n,p/4}$ and observe that $G_1\cup G_2\cup G_3\cup G_4$ has the same distribution as $G_{n,p'}$ for some $p'\leq p$. Hence it suffices to consider the graph $G\cup (G_1\cup G_2\cup G_3\cup G_4)$ instead of the graph $G\cup G_{n,p}$.
Define $\mathcal W:=\{W_{i,j}:  
0 \leq i \leq t\text{ and } 1\leq j \leq k\}$
and $k':=|\mathcal W|=k(t+1)$.
\begin{claim}\label{claim2}
Asymptotically almost surely there is a set $Z\subseteq V(G)$ such that: 
\begin{itemize}
\item[(i)] $(G'\cup G_1)[Z]$ contains a perfect $H$-tiling $\mathcal H_1$; 
\item[(ii)] $V_0\subseteq Z$, and for all $W_{i,j} \in \mathcal W$ we have that $|Z\cap W_{i,j}|\leq \eps_1 L$.
\end{itemize}
\end{claim}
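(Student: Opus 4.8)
The plan is to cover $V_0$ greedily, one vertex at a time, using each $v\in V_0$ as the ``centre'' of a copy of $H$. Writing $H'=H-x$ for the fixed subgraph chosen at the start of this section (so $x$ is the deleted vertex of $H$), I will put a given $v\in V_0$ into a copy of $H$ by letting $v$ play the role of $x$ and taking the remaining $|H|-1$ vertices from $N_{G'}(v)$, where they should span a copy of $H'$ in $G_1$. The point is that this automatically yields a genuine copy of $H$ in $G'\cup G_1$: since $v$ is adjacent in $G'$ to \emph{every} vertex of $N_{G'}(v)$, each edge of $H$ at $x$ becomes a $G'$-edge from $v$, while each edge of $H$ avoiding $x$ lies inside the copy of $H'$ in $G_1$. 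In particular the choice of $x$ is immaterial, and $H'$ need not even contain an edge.

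The only probabilistic input is the event $G_1\in F_H(\eta)$, which holds a.a.s.\ by Theorem~\ref{jansonthm1} (applied with $H$ and $\eta$, and with $c/4$ in the role of $C$); I condition on it for the rest of the argument. Since $H'\subseteq H$, this already gives $G_1\in F_{H'}(\eta)$, i.e.\ every $S\subseteq V(G)$ with $|S|\geq\eta n$ spans a copy of $H'$ in $G_1$ (trivially so if $H'$ is edgeless).

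With this in hand I would run the following greedy process. Maintain a set $U$ of vertices already used (initially empty), using only $V_0$-vertices as centres and cluster-vertices for the $H'$-parts, and process the vertices of $V_0$ in an arbitrary order. For the next not-yet-covered $v\in V_0$, set
$$ N':=N_{G'}(v)\setminus\Big(U\cup V_0\cup\!\!\bigcup_{W_{i,j}\ \text{saturated}}\!\! W_{i,j}\Big), $$
where $W_{i,j}$ is called \emph{saturated} once $|U\cap W_{i,j}|\geq\eps_1L-|H|$. By Claim~\ref{claim1}(v), $|N_{G'}(v)|\geq(\alpha-2d)n\geq\alpha n/2$; each copy built so far meets $V_0$ in exactly one vertex, so $|U|\leq|V_0|\,|H|\leq 2t|H|\eps n$ and $|V_0|\leq 2t\eps n$; and the number of saturated clusters is at most $|U|/(\eps_1L-|H|)\leq 2|U|/(\eps_1L)$, so the vertices deleted on their account number at most $2|U|/\eps_1\leq 4t|H|\eps n/\eps_1$. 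Hence $|N'|\geq\alpha n/2-O(t|H|\eps)n\geq\alpha n/4\geq\eta n$, where the penultimate step uses $\eps\ll\eps_1$ (recall $t=\lceil2\alpha^{-1}\rceil$ and $|H|$ are fixed) and the last uses $\eta\ll 1/M\leq\alpha$ from~(\ref{hier}). So $G_1[N']$ contains a copy $K$ of $H'$, and $\{v\}\cup V(K)$ spans a copy of $H$ in $G'\cup G_1$ disjoint from $U$ (as $N'\cap U=\emptyset$ and $v\notin U$); add it to the tiling, update $U$, and repeat. After $|V_0|$ steps the set $Z:=U$ contains $V_0$ and is perfectly $H$-tiled by the copies built, giving (i); and since a cluster gains vertices only while unsaturated and a single step adds at most $|H|-1$ of them, $|Z\cap W_{i,j}|<(\eps_1L-|H|)+|H|=\eps_1L$ throughout, giving (ii).

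The step I expect to be delicate is precisely the inequality $|N'|\geq\eta n$. Because $M=M(\eps,\ell_0)$ is astronomically large while each cluster has size only $L\geq n/(2M)$, one must be careful never to multiply $M$ (equivalently $1/L$) by a factor that is merely $o(1)$ as $\eps\to0$; in particular the vertices discarded because of saturated clusters must be bounded as $(\#\text{saturated clusters})\cdot L\leq 2|U|/\eps_1$ directly, a quantity controlled purely by $|U|\leq 2t|H|\eps n$ and $\eps_1$, and so independent of $M$ and $L$. Granting this, the estimate follows from the hierarchy~(\ref{hier}) and the remainder is routine bookkeeping; the only other points to keep in mind are that a $V_0$-vertex is never reused as an $H'$-vertex (ensured by deleting all of $V_0$ from each $N'$) and that Theorem~\ref{jansonthm1} is only ever invoked for $H$, via $F_H(\eta)\subseteq F_{H'}(\eta)$.
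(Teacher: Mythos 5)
Your proof is correct and follows essentially the same strategy as the paper: condition on the single a.a.s.\ event $G_1\in F_H(\eta)$ (hence $G_1\in F_{H'}(\eta)$) and greedily cover each $v\in V_0$ by a copy of $H$ whose remaining $|H|-1$ vertices lie in $N_{G'}(v)$ among cluster vertices and span $H'$ in $G_1$. The only difference is bookkeeping: the paper first fixes an assignment $f:V_0\rightarrow \mathcal W$ via an auxiliary bipartite graph of high-degree vertex--cluster pairs, so that each copy's $H'$-part lies in the single cluster $f(v)$ and at most $\eps_1 L/|H|$ vertices are assigned to any cluster, whereas you enforce the bound $|Z\cap W_{i,j}|\leq \eps_1 L$ on the fly with your saturation rule; both give Claim~\ref{claim2}(ii) equally well.
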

\begin{proof}
Define an auxiliary bipartite graph $Q$ with vertex classes $V_0$ and $\mathcal W$ in which a vertex $v\in V_0$ is adjacent to a cluster $W_{i,j}$ precisely if $d_{G'}(v,W_{i,j})\geq \alpha L/4$.
For each $v \in V_0$, $d_{G'}(v)\geq (\alpha-2d)n$ and is adjacent to at most $2t\eps n$ vertices in $V_0$. Therefore,
$$3\alpha n/4 \stackrel{(\ref{hier})}{\leq }d_{G'}(v)-2t\eps n \leq  L d_Q(v)+k'\alpha L/4 \leq L d_Q(v)+\alpha n/4, $$
and so 
$d_Q(v)\geq \alpha k'/2$.

Hence we can find an assignment $f: V_0\rightarrow \mathcal W$ such that $vf(v)$ is an edge in $Q$ for all $v \in V_0$ and for any $W_{i,j} \in \mathcal W$, 
\begin{equation}\label{inversesmall}
|f^{-1}(W_{i,j})|\leq \frac{4t\eps n}{\alpha k'} \stackrel{(\ref{hier})}{\leq } \frac{\eps _1 L}{|H|}.
\end{equation}
Here in the first inequality we use that $|V_0|\leq 2t\eps n$ and  $d_Q(v)\geq \alpha k'/2$ for all $v\in V_0$; in the last inequality we use that $k'L=n-|V_0|\geq (1-2t\eps)n$.

Enumerate the vertices $v_1,\dots, v_s$ of $V_0$. For each such $v_i$, we will obtain a copy $H_i$ of $H$ in $G\cup G_1$ so that:
\begin{itemize}
\item $H_i$ contains $v_i$ for each $1\leq i \leq s$, and all the other vertices in $H_i$ lie in the cluster $f(v_i) \in \mathcal W$;
\item $H_i$ and $H_j$ are vertex-disjoint for all $1\leq i\not = j \leq s$.
\end{itemize}
Note that finding such copies of $H$ would immediately prove the claim. Indeed, we then define $Z$ to consist of all the vertices in the $H_i$. In particular, (\ref{inversesmall}) then implies that $|Z\cap W_{i,j}|\leq \frac{\eps_1 L}{|H|} \times (|H|-1)\leq \eps _1 L$ for
each $W_{i,j} \in \mathcal W$.

Suppose for some $1\leq j <s$ we have constructed $H_1,\dots, H_{s-1}$ as desired. Consider $v_s\in V_0$ and let $W_{i_s,j_s}:=f(v_s)$. Then by the definition of $Q$ and $f$, there is a set $W \subseteq W_{i_s,j_s}$ so that
$|W|\geq \alpha L/4-\eps _1 L\geq \alpha L/5$; $W$ is disjoint from $H_1,\dots, H_{s-1}$ and; $W\subseteq N_{G'}(v_s)$.

Note that $H':=H-x$ (for some $x \in V(H)$)  either  consists of isolated vertices or $0<d^*(H')\leq d^*(H)$. Further $\eta n  \leq \alpha n/(10M)\leq  \alpha L/5$ by (\ref{hier}) and Claim~\ref{claim1}(iii). 
So Theorem~\ref{jansonthm1} implies that a.a.s, $G_1[W]$ contains a copy of $H'$. Since in $G'$, $v_s$ is adjacent to every vertex in $W$, this yields the desired copy $H_s$ of $H$ in $G\cup G_1$. 
\end{proof}

For all $i,j$ set 
$$V_{i,j}:=W_{i,j}\setminus Z.$$ 
Since $(G'\cup G_1)[Z]$ contains a perfect $H$-tiling $\mathcal H_1$, it suffices to prove that, a.a.s, there is an $H$-tiling in $G'\cup G_2\cup G_3 \cup G_4$ that covers precisely the vertices in $V(G)\setminus Z$.
Note that by Claim~\ref{claim1}(iv) and Claim~\ref{claim2}(ii) we have that $(V_{0,j},V_{i,j})_{G'}$ is $(5\eps,d/3)$-super-regular for all $1\leq i \leq t$ and $1\leq j \leq k$.
Further,
\begin{align}\label{eq11}
(1-\eps _1)L\leq |V_{i,j}|\leq L
\end{align}
for all $0\leq i \leq t$ and $1\leq j \leq k$.

For each $1\leq j \leq k$, randomly partition $V_{0,j}$ into $t$ vertex classes $S_{1,j},\dots, S_{t,j}$ such that
\begin{align}\label{eq12}
|S_{i,j}|=\frac{|V_{0,j}|}{t}
\end{align}
\COMMENT{here ignoring floors and ceilings}
for all $1\leq i\leq t$.
For every $1\leq i \leq t$ and $1\leq j \leq k$, randomly partition $V_{i,j}$ into two vertex classes $V'_{i,j}$ and $T'_{i,j}$ so that
\begin{align}\label{eq13}
|T'_{i,j}|=\frac{|V_{i,j}|}{t}.
\end{align}

\smallskip

Roughly speaking, the rest of the proof now proceeds as follows:
For each of the clusters $V'_{i,j}$ we will find an almost perfect $H$-tiling in $G_2[V'_{i,j}]$. 
This will ensure that  almost all of the uncovered vertices in $V(G)$ lie in the clusters $S_{i,j}$ and $T'_{i,j}$ (for all $i,j$). However, as the next claim shows, each $(S_{i,j},T'_{i,j})_{G'}$ is `super-regular'. 
Then by modifying each of the super-regular pairs slightly,
this structure will allow
us to cover all remaining vertices in $V(G)$ with an $H$-tiling, thereby completing our perfect $H$-tiling.

\begin{claim}\label{claim3}
The following conditions hold a.a.s:
\begin{itemize}
\item[(i)] Let $1\leq i \leq t$ and $1\leq j \leq k$. For all sets $X\subseteq S_{i,j}$ and $Y \subseteq T'_{i,j}$ with $|X|\geq \eps _1 |S_{i,j}|$ and $|Y|\geq \eps _1 |T'_{i,j}|$, we have that
$d_{G'}(X,Y)\geq d/3$;
\item[(ii)] Let $1\leq j \leq k$. For every vertex $v \in V_{0,j}$, $d_{G'}(v,T'_{i,j})> d|T'_{i,j}|/4$ for all $1\leq i \leq t$;
\item[(iii)] Let $1\leq i \leq t$ and $1\leq j \leq k$.  For every $v \in V_{i,j}$, $d_{G'}(v,S_{i,j})> d|S_{i,j}|/4$.
\end{itemize}
In particular, (i)--(iii) imply that $(S_{i,j},T'_{i,j})_{G'}$ is $(\eps_1,d/4)$-super-regular for all $1\leq i \leq t$ and $1\leq j \leq k$.
\end{claim}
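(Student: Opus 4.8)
The plan is to prove each of the three conditions separately via a union bound over the (constantly many) choices of $i,j$ together with a union bound over the relevant small subsets, using that the random partitions concentrate. First observe that before partitioning we already know from the remarks following Claim~\ref{claim2} that $(V_{0,j},V_{i,j})_{G'}$ is $(5\eps,d/3)$-super-regular, and that each cluster has size $\Theta(n/M)$, which is linear in $n$. The random partitions in (\ref{eq12}) and (\ref{eq13}) split $V_{0,j}$ into $t$ pieces $S_{i,j}$ and $V_{i,j}$ into $V'_{i,j}\cup T'_{i,j}$; each $S_{i,j}$ and each $T'_{i,j}$ has size $\Theta(L/t)$, still linear in $n$. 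The key point is that a uniformly random subset of prescribed size inherits regularity/degree properties with only a small loss in the parameters, except with exponentially small probability.

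For part (i): fix $i,j$. For a fixed pair $X\subseteq S_{i,j}$, $Y\subseteq T'_{i,j}$ with $|X|\ge \eps_1|S_{i,j}|$ and $|Y|\ge \eps_1|T'_{i,j}|$, note $|X|,|Y|\ge \eps_1\cdot\Theta(L/t)\ge 5\eps L$ by the hierarchy (\ref{hier}), since $\eps\ll\eps_1$ and $t$ is a fixed constant; hence $X\subseteq V_{0,j}$ and $Y\subseteq V_{i,j}$ witness the $(5\eps,d/3)$-super-regularity of $(V_{0,j},V_{i,j})_{G'}$, giving $d_{G'}(X,Y)>d/3$. This already gives (i) deterministically — no randomness is needed at all, one just uses that the threshold $\eps_1|S_{i,j}|$ exceeds $5\eps|V_{0,j}|$. (If one prefers, the same works directly from $(4\eps,d/2)$-super-regularity of the original pair.)

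For parts (ii) and (iii): these are genuinely probabilistic. Fix $1\le j\le k$ and a vertex $v\in V_{0,j}$. By super-regularity of $(V_{0,j},V_{i,j})_{G'}$ we have $d_{G'}(v,V_{i,j})>\tfrac{d}{3}|V_{i,j}|$, i.e.\ $v$ has a linear-sized neighbourhood $N:=N_{G'}(v)\cap V_{i,j}$ inside $V_{i,j}$. Since $T'_{i,j}$ is a uniformly random subset of $V_{i,j}$ of size $|V_{i,j}|/t$, the quantity $d_{G'}(v,T'_{i,j})=|N\cap T'_{i,j}|$ is hypergeometrically distributed with mean $|N|/t>\tfrac{d}{3t}|V_{i,j}|$, and $|T'_{i,j}|/t\cdot d<\tfrac{d}{3t}|V_{i,j}|$ comfortably (in fact we want the bound $>d|T'_{i,j}|/4$, and $d/4<(d/3)/t\cdot t = d/3$... so we need $\tfrac14 d|T'_{i,j}| = \tfrac14 d\cdot |V_{i,j}|/t < \tfrac{d}{3}|V_{i,j}|/t$, which holds). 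By a Chernoff/Hoeffding bound for the hypergeometric distribution, $\mathbb{P}\big(d_{G'}(v,T'_{i,j})\le d|T'_{i,j}|/4\big)\le e^{-\Omega(n)}$. Taking a union bound over the at most $t\cdot k'\cdot n = O(n)$ choices of $(i,j,v)$ still leaves failure probability $e^{-\Omega(n)}=o(1)$, so (ii) holds a.a.s. Part (iii) is symmetric: for $v\in V_{i,j}$ we have $d_{G'}(v,V_{0,j})>\tfrac{d}{3}|V_{0,j}|$ by super-regularity, and $S_{i,j}$ is a uniformly random size-$|V_{0,j}|/t$ subset of $V_{0,j}$, so the same hypergeometric concentration and union bound give $d_{G'}(v,S_{i,j})>d|S_{i,j}|/4$ for all such $v$ a.a.s.

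Finally, the ``in particular'' statement is immediate once (i)--(iii) hold: (i) is exactly the condition that every pair of subsets of relative size $\ge\eps_1$ spans density $>d/4$ (indeed $\ge d/3\ge d/4$), and (ii),(iii) give the minimum-degree condition $d_{G'}(v)>\tfrac{d}{4}\cdot(\text{size of the opposite class})$ for every vertex $v$ on each side of $(S_{i,j},T'_{i,j})_{G'}$; together these are precisely the definition of $(\eps_1,d/4)$-super-regularity. The main obstacle is purely bookkeeping: one must check that the constants line up — that $\eps_1|S_{i,j}|$ and $\eps_1|T'_{i,j}|$ exceed the $5\eps$-thresholds of the pre-existing super-regular pairs (this uses $\eps\ll\eps_1$ and that $t=\lceil 2\alpha^{-1}\rceil$ is a constant fixed before $\eps_1$), and that the hypergeometric mean beats the target $d/4$ with room to spare so that the Chernoff bound kicks in before the $O(n)$-size union bound. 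No step is conceptually hard; the care is all in the hierarchy (\ref{hier}).
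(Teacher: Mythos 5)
Your proposal is correct and matches the paper's proof: part (i) is handled deterministically by noting that $\eps_1|S_{i,j}|=\eps_1|V_{0,j}|/t\geq 5\eps|V_{0,j}|$ (and similarly for $T'_{i,j}$) so the $(5\eps,d/3)$-super-regularity of $(V_{0,j},V_{i,j})_{G'}$ applies, while (ii) and (iii) follow from Chernoff bounds for the hypergeometric distribution applied to the random partition, with a union bound over the linearly many vertices and constantly many pairs. Apart from some slightly garbled arithmetic in your discussion of the mean versus the target $d|T'_{i,j}|/4$ (the conclusion $d/4<d/3$ is what matters and is right), this is essentially the paper's argument.
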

\begin{proof}
For (i), note that $X\subseteq V_{0,j}$ and $Y\subseteq V_{i,j}$ where $|X|\geq \eps _1 |S_{i,j}| \stackrel{(\ref{eq12})}{=}\eps _1 |V_{0,j}|/t\stackrel{(\ref{hier})}{\geq} 5\eps |V_{0,j}|$
and $|Y|\geq 5\eps |V_{i,j}|$. So as $(V_{0,j},V_{i,j})_{G'}$ is $(5\eps,d/3)$-super-regular, (i) follows immediately.

For (ii) note that such a $v \in V_{0,j}$ satisfies $d_{G'}(v,V_{i,j})\geq d|V_{i,j}|/3$ for all $1\leq i \leq t$. Thus by applying a Chernoff bound for the hypergeometric distribution (e.g.~\cite[Theorem 2.10]{jlr}), a.a.s.~(ii) holds. 
(iii) holds similarly.
\end{proof}
 We will now obtain an $H$-tiling $\mathcal H_2$ in $G_2$ that covers almost all of the vertices in the classes $V'_{i,j}$.
Fix $1\leq i \leq t$ and $1\leq j \leq k$. 
By repeatedly applying Theorem~\ref{jansonthm1} to $G_2$, we obtain an
$H$-tiling $\mathcal H_{i,j}$ in $G_2[V'_{i,j}]$ that covers all but at most $\eta n$ of the vertices in $V'_{i,j}$.
Let $\mathcal H_2$ denote the $H$-tiling in $G_2$ obtained by taking the disjoint union of all of the $\mathcal H_{i,j}$, and write $Z':=V(\mathcal H_2)$.

For each $1\leq i \leq t$ and $1\leq j \leq k$, let $T_{i,j}$ be obtained from $T'_{i,j}$ by adding to it all the vertices in $V'_{i,j}$ that are uncovered by $\mathcal H_{i,j}$.
Thus, $|T_{i,j}|\leq |T'_{i,j}|+\eta n$. Note that 
$|T'_{i,j}|\stackrel{(\ref{eq13})}{=}|V_{i,j}|/t \stackrel{(\ref{eq11})}{\geq} (1-\eps_1)L/t$.
On the other hand, $\eta n \leq \eps (1-\eps_1)L/t$ by (\ref{hier}) and Claim~\ref{claim1}(iii). 
Hence, $|T_{i,j}|\leq (1+\eps)|T'_{i,j}|\stackrel{(\ref{eq11}),(\ref{eq13})}{\leq} (1+\eps)L/t$. Together with Claim~\ref{claim3}(i)--(iii) this implies that 
$(S_{i,j},T_{i,j})_{G'}$ is $(2\eps_1,d/5)$-super-regular.
To summarise, so far we have proved the following.
\begin{claim}\label{claim4}
Asymptotically almost surely, there is a partition of $V(G)$ into classes $Z,$ $Z'$ and $S_{i,j}$, $T_{i,j}$ for each $1\leq i \leq t$ and $1\leq j \leq k$, such that:
\begin{itemize}
\item[(i)] There is a perfect $H$-tiling $\mathcal H_1$ in $(G'\cup G_1)[Z]$ and a perfect $H$-tiling $\mathcal H_2$ in $G_2[Z']$;
\item[(ii)] Each $(S_{i,j},T_{i,j})_{G'}$ is $(2\eps_1,d/5)$-super-regular;
\item[(iii)] $(1-\eps _1)L/t\leq |S_{i,j}|,|T_{i,j}|\leq  (1+\eps)L/t$. \qed
\end{itemize}
\end{claim}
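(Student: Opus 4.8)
\textbf{Proof proposal for Claim~\ref{claim4}.}
The plan is to assemble Claim~\ref{claim4} from the constructions and a.a.s.~statements already carried out in the paragraphs preceding it; no genuinely new argument is needed, only bookkeeping. Every property asserted holds on the intersection of finitely many a.a.s.~events -- the event of Claim~\ref{claim2} (randomness of $G_1$), the event of Claim~\ref{claim3} (randomness of the partitions), and the event $G_2\in F_H(\eta)$ (randomness of $G_2$, via Theorem~\ref{jansonthm1} with $c/4$ in the role of $C$) -- which again has probability tending to~$1$.

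First I would take the set $Z$ supplied by Claim~\ref{claim2}: it contains $V_0$, satisfies $|Z\cap W_{i,j}|\le\eps_1 L$ for every $W_{i,j}\in\mathcal W$, and $(G'\cup G_1)[Z]$ contains a perfect $H$-tiling $\mathcal H_1$ -- this is half of~(i). Setting $V_{i,j}:=W_{i,j}\setminus Z$, the bound on $|Z\cap W_{i,j}|$ gives $(1-\eps_1)L\le|V_{i,j}|\le L$, and since deleting at most $\eps_1 L$ vertices from each side of the $(4\eps,d/2)$-super-regular pair $(W_{0,j},W_{i,j})_{G'}$ of Claim~\ref{claim1}(iv) leaves a $(5\eps,d/3)$-super-regular pair, $(V_{0,j},V_{i,j})_{G'}$ is $(5\eps,d/3)$-super-regular. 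Then carry out the two random partitions exactly as specified before the claim -- split $V_{0,j}$ into equal parts $S_{1,j},\dots,S_{t,j}$, and split each $V_{i,j}$ ($1\le i\le t$) into $V'_{i,j}\cup T'_{i,j}$ with $|T'_{i,j}|=|V_{i,j}|/t$ -- so that, by Claim~\ref{claim3}, each $(S_{i,j},T'_{i,j})_{G'}$ is $(\eps_1,d/4)$-super-regular. Since $|V'_{i,j}|=(1-1/t)|V_{i,j}|\ge\eta n$ by~(\ref{hier}), (\ref{eq11}) and Claim~\ref{claim1}(iii), repeatedly applying Theorem~\ref{jansonthm1} inside $G_2[V'_{i,j}]$ gives an $H$-tiling $\mathcal H_{i,j}$ of $V'_{i,j}$ leaving at most $\eta n$ vertices uncovered; put $\mathcal H_2:=\bigcup_{i,j}\mathcal H_{i,j}$ and $Z':=V(\mathcal H_2)$, which completes~(i).

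Finally, set $T_{i,j}:=T'_{i,j}\cup\bigl(V'_{i,j}\setminus V(\mathcal H_{i,j})\bigr)$, so that $Z$, $Z'$ and the sets $S_{i,j},T_{i,j}$ partition $V(G)$. As $|T'_{i,j}|\ge(1-\eps_1)L/t$ by~(\ref{eq11}) and~(\ref{eq13}), while $|V'_{i,j}\setminus V(\mathcal H_{i,j})|\le\eta n\le\eps(1-\eps_1)L/t\le\eps|T'_{i,j}|$ by~(\ref{hier}) and Claim~\ref{claim1}(iii), one gets $|T'_{i,j}|\le|T_{i,j}|\le(1+\eps)|T'_{i,j}|\le(1+\eps)L/t$; combined with $|S_{i,j}|=|V_{0,j}|/t$ and $(1-\eps_1)L\le|V_{0,j}|\le L$ this is~(iii). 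For~(ii) I would check directly from the definition of super-regularity that enlarging $T'_{i,j}$ by at most $\eta n\le\eps_1|T'_{i,j}|$ vertices turns the $(\eps_1,d/4)$-super-regular pair $(S_{i,j},T'_{i,j})_{G'}$ into a $(2\eps_1,d/5)$-super-regular pair: any $Y\subseteq T_{i,j}$ with $|Y|\ge 2\eps_1|T_{i,j}|$ satisfies $|Y\cap T'_{i,j}|\ge\eps_1|T'_{i,j}|$, so the density bound of Claim~\ref{claim3}(i) carries over to $(X,Y)$ for any $X\subseteq S_{i,j}$ with $|X|\ge 2\eps_1|S_{i,j}|$; the vertices of $T_{i,j}\subseteq V_{i,j}$ have degree $>d|S_{i,j}|/4>d|S_{i,j}|/5$ into $S_{i,j}$ by Claim~\ref{claim3}(iii); and every $v\in S_{i,j}\subseteq V_{0,j}$ has $d_{G'}(v,T_{i,j})\ge d_{G'}(v,T'_{i,j})>d|T'_{i,j}|/4>d|T_{i,j}|/5$ by Claim~\ref{claim3}(ii). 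This last verification is the only point requiring any care, and it is a routine consequence of the hierarchy $\eta\ll\eps_1$ together with Claim~\ref{claim3}; the claim itself, being a summary of the preceding construction, presents no real obstacle.
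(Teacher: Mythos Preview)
Your proposal is correct and follows essentially the same approach as the paper: Claim~\ref{claim4} is presented there as a summary (marked \qed) of the construction carried out in the paragraphs between Claim~\ref{claim3} and Claim~\ref{claim4}, and you reproduce that construction step by step. The only difference is cosmetic---you spell out the verification that enlarging $T'_{i,j}$ to $T_{i,j}$ preserves super-regularity via Claim~\ref{claim3}(i)--(iii), whereas the paper condenses this into a single sentence.
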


To finish the proof of Theorem~\ref{mainthm} we wish to prove that a.a.s.~there is a perfect $H$-tiling in each of the graphs $(G'\cup G_4)[S_{i,j}\cup T_{i,j}]$.
However, for this we need that $|H|$ divides $|S_{i,j}\cup T_{i,j}|$. So we first modify the clusters $S_{i,j}, T_{i,j}$ slightly to ensure this.

Suppose that $1\leq i \leq t$ and $1\leq j \leq k$ are such that $|S_{i,j}\cup T_{i,j}|$ is not divisible by $|H|$. Then, since $|H|$ divides $n$, there are  $1\leq i' \leq t$ and $1\leq j' \leq k$ with
$(i,j)\not =(i',j')$ so that $|S_{i',j'}\cup T_{i',j'}|$ is also not divisible by $|H|$.
Recall $|S_{i,j}|, |S_{i',j'}|\gg \eta n$. Thus,
 by repeated applications of Theorem~\ref{jansonthm2}, a.a.s.~we can find a collection $\mathcal S_{i,j}$ of at most $|H|-1$ disjoint copies of $H$ in $G_3$ so that: each such $H$ has one vertex in $S_{i,j}$ and $|H|-1$ vertices in 
$S_{i',j'}$ and; after removal of the vertices in $\mathcal S_{i,j}$ we now have that $|S_{i,j}\cup T_{i,j}|$ is divisible by $|H|$. 
Continuing in this way, we  obtain an $H$-tiling $\mathcal H_3$ in $G_3$ so that:
$\mathcal H_3$ only covers vertices in the clusters $ S_{i,j}$; $|\mathcal H_3|\leq |H|kt$ and; after removal of all those vertices in $\mathcal H_3$ from each of the $ S_{i,j}$ we have that
$|S_{i,j}\cup T_{i,j}|$ is divisible by $|H|$ for all $1\leq i \leq t$ and $1\leq j \leq k$.

In particular, we now have the following updated version of Claim~\ref{claim4}.
\begin{claim}\label{claim5}
Asymptotically almost surely, there is a partition of $V(G)$ into classes $Z,$ $Z'$, $Z''$ and $S_{i,j}$, $T_{i,j}$ for each $1\leq i \leq t$ and $1\leq j \leq k$, such that:
\begin{itemize}
\item[(i)] There is a perfect $H$-tiling $\mathcal H_1$ in $(G'\cup G_1)[Z]$; a perfect $H$-tiling $\mathcal H_2$ in $G_2[Z']$ and; a perfect $H$-tiling $\mathcal H_3$ in $G_3[Z'']$;
\item[(ii)] Each $(S_{i,j},T_{i,j})_{G'}$ is $(3\eps_1,d/6)$-super-regular;
\item[(iii)] $(1-2\eps _1)L/t\leq |S_{i,j}|,|T_{i,j}|\leq  (1+\eps)L/t$;
\item[(iv)] $|S_{i,j}\cup T_{i,j}|$ is divisible by $|H|$ for all $1\leq i \leq t$ and $1\leq j \leq k$. \qed
\end{itemize}
\end{claim}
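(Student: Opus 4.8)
The plan is to read off Claim~\ref{claim5} from Claim~\ref{claim4} by performing one more small modification --- the divisibility correction using the random graph $G_3$ --- and then checking that the super-regularity and size bounds from Claim~\ref{claim4} survive the (constantly many) vertex deletions this entails. So I start with the partition $Z,Z',\{S_{i,j},T_{i,j}\}$ and the tilings $\mathcal H_1\subseteq(G'\cup G_1)[Z]$, $\mathcal H_2\subseteq G_2[Z']$ supplied by Claim~\ref{claim4}, and aim to carve out a short $H$-tiling $\mathcal H_3$ in $G_3$, lying entirely inside the $S$-clusters, after which every $|S_{i,j}\cup T_{i,j}|$ is divisible by $|H|$.

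First I would set up the bookkeeping invariant driving the correction. Since $|H|$ divides $n$ and divides both $|Z|=|V(\mathcal H_1)|$ and $|Z'|=|V(\mathcal H_2)|$, the total $\sum_{i,j}|S_{i,j}\cup T_{i,j}|$ is $\equiv 0\pmod{|H|}$; in particular, whenever some cluster $S_{i,j}\cup T_{i,j}$ has nonzero residue $a$, at least one other cluster $S_{i',j'}\cup T_{i',j'}$ does too. For such a pair, $|S_{i,j}|,|S_{i',j'}|\geq(1-2\eps_1)L/t\gg\eta n$ by (\ref{hier}) and Claim~\ref{claim1}(iii), so Theorem~\ref{jansonthm2} applies; iterating it $a\leq|H|-1$ times (each time discarding the at most $|H|$ vertices just used, which keeps both sets well above $\eta n$) gives, a.a.s., $a$ vertex-disjoint copies of $H$ in $G_3$, each with one vertex in $S_{i,j}$ and $|H|-1$ vertices in $S_{i',j'}$. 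Deleting their vertices makes $|S_{i,j}\cup T_{i,j}|$ divisible by $|H|$, changes $|S_{i',j'}\cup T_{i',j'}|$ by a multiple of $|H|$ in the total count, and hence preserves the invariant. The key point is that each step \emph{strictly decreases} the number of clusters of nonzero residue (cluster $(i,j)$ is fixed, while cluster $(i',j')$ was already bad), so after at most $kt$ steps the procedure terminates; by the invariant it cannot stop with exactly one bad cluster, so it ends with all $|S_{i,j}\cup T_{i,j}|$ divisible by $|H|$. Take $\mathcal H_3$ to be the union of all copies produced and $Z'':=V(\mathcal H_3)$; then $\mathcal H_3$ is a perfect $H$-tiling of $G_3[Z'']$ with $|\mathcal H_3|\leq kt|H|$, and only a number of vertices bounded in terms of $M$ and $|H|$ is removed from any single cluster.

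It then remains to verify (i)--(iv). Item (i) is Claim~\ref{claim4}(i) together with the $\mathcal H_3$ just built in $G_3[Z'']$. For (iii): Claim~\ref{claim4}(iii) gave $|S_{i,j}|,|T_{i,j}|\geq(1-\eps_1)L/t$, and we removed only $O_{M,|H|}(1)\ll\eps_1 L/t$ vertices, all from $S$-clusters and none from any $T_{i,j}$, so the lower bound weakens only to $(1-2\eps_1)L/t$ while the upper bound is untouched. For (ii) I would invoke the standard robustness of super-regularity under deletion of a small proportion of vertices: since each $(S_{i,j},T_{i,j})_{G'}$ was $(2\eps_1,d/5)$-super-regular and we deleted at most $O_{M,|H|}(1)\ll\eps_1|S_{i,j}|$ vertices (from the $S$-side only), the pair stays $(3\eps_1,d/6)$-super-regular. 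Item (iv) is precisely the termination guarantee. The only genuinely delicate point is the divisibility correction itself --- arguing via the $\pmod{|H|}$-invariant both that a partner bad cluster always exists and that the process terminates --- together with the routine union bound over the at most $kt|H|$ invocations of Theorem~\ref{jansonthm2} needed to make ``a.a.s.'' uniform over all the copies of $H$ extracted from $G_3$; everything else is mechanical bookkeeping on top of Claim~\ref{claim4}.
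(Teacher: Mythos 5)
Your proposal is correct and follows essentially the same route as the paper: starting from Claim~\ref{claim4}, pair up clusters whose sizes have nonzero residue modulo $|H|$ (using that $|H|$ divides $n$, $|Z|$ and $|Z'|$), repeatedly apply Theorem~\ref{jansonthm2} to $G_3$ to extract at most $|H|-1$ copies of $H$ with one vertex in $S_{i,j}$ and $|H|-1$ vertices in $S_{i',j'}$, and note that the constantly many deletions leave the pairs $(3\eps_1,d/6)$-super-regular with the stated sizes; your explicit residue-sum invariant and termination argument just spells out what the paper compresses into ``continuing in this way''. (One tiny remark: no union bound over invocations is really needed, since the single a.a.s.\ event $G_3\in F'_H(\eta)$ already covers all pairs of sets simultaneously, though a union bound over constantly many applications is of course also harmless.)
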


Now all that remains to prove is that there is a perfect $H$-tiling in each of the graphs $(G'\cup G_4)[S_{i,j}\cup T_{i,j}]$.
 Let $(S,T)_{G'}$ be one of our super-regular pairs and recall that $H'$ is an induced subgraph of $H$ on $h:=|H|-1$ vertices.

Call an $h$-set $\{v_1,\ldots,v_h\}\in \binom{T}{h}$ \emph{good} if $|S\cap N_{G'}(v_1)\cap\ldots \cap N_{G'}(v_h)|\geq d_1 |S|$,
otherwise we say $\{v_1,\ldots,v_h\}\in \binom{T}{h}$ is \emph{bad}. 
\begin{claim}\label{newc}
At least $(1-4\eps_1)^{h-1}\binom{|T|}{h}$ $h$-sets in $\binom{T}{h}$ are good. 
\end{claim}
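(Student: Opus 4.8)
The plan is to bound the number of \emph{bad} $h$-sets by counting, for each position $1\le r\le h$, the pairs that `fail' at step $r$ when we try to build the common neighbourhood greedily. Concretely, reveal the vertices $v_1,\dots,v_h\in T$ one at a time and track the set $N_r:=S\cap N_{G'}(v_1)\cap\cdots\cap N_{G'}(v_r)$. Since $(S,T)_{G'}$ is $(3\eps_1,d/6)$-super-regular, $|N_1|=d_{G'}(v_1,S)>\frac{d}{6}|S|>d_1|S|$ for \emph{every} $v_1\in T$ (here I use $d_1\ll d$ from~(\ref{hier})), so the process never dies at the first step. For the inductive step, suppose $|N_{r-1}|\ge d_1|S|$; I want to say that `most' choices of $v_r$ keep $|N_r|\ge d_1|S|$. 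This is exactly where $\eps$-regularity of the pair is used: the set $B_r:=\{v\in T: d_{G'}(v,N_{r-1})\le d_1 |N_{r-1}|\}$ of `bad extensions' must be small, since otherwise $(B_r,N_{r-1})$ would be a pair of linear-sized sets with density $\le d_1<d/6-3\eps_1$, contradicting regularity; thus $|B_r|<3\eps_1|T|\le 4\eps_1|T|$.

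So the counting goes: an $h$-set $\{v_1,\dots,v_h\}$ (say with a fixed ordering of its elements) is \emph{guaranteed good} if at no step $r\in\{2,\dots,h\}$ does $v_r$ land in the bad-extension set determined by $v_1,\dots,v_{r-1}$; in that case $|N_h|\ge d_1^{?}|S|$ --- wait, one must be careful that $d_1|N_{r-1}|$ shrinks geometrically. Here I would instead define the threshold at each step as a fixed fraction: declare $v_r$ a bad extension of $(v_1,\dots,v_{r-1})$ if $d_{G'}(v_r,N_{r-1})\le \delta|S|$ for an appropriate constant $\delta$ with $d_1\ll \delta \ll d$, noting $|N_{r-1}|\ge \delta|S|\ge \eps|S|$ so regularity still applies; then a good extension gives $|N_r|\ge d_{G'}(v_r,N_{r-1})-(\text{density drop})$, and choosing the hierarchy so that after $h-1$ good steps we still have $|N_h|\ge d_1|S|$ (possible since $h$ is a fixed constant, so only a constant number of controlled multiplicative losses occur). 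The count of $h$-sets all of whose extension steps are good is at least $\prod_{r=1}^{h}(|T|-r+1-4\eps_1|T|)\big/h! \ge (1-4\eps_1)^{h-1}\binom{|T|}{h}$, since at each of the $h-1$ non-trivial steps at most a $4\eps_1$-fraction of the remaining vertices of $T$ are excluded (and dividing by $h!$ accounts for the ordering).

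The main obstacle, and the only place real care is needed, is bookkeeping the geometric shrinkage of $|N_r|$ across the $h-1$ steps so that the final common neighbourhood is still at least $d_1|S|$, rather than some tiny $(d/6)^h|S|$ that could be smaller than $d_1|S|$. This is handled cleanly by the hierarchy~(\ref{hier}): since $h=|H|-1$ is an absolute constant fixed before $\eps_1,d_1$, one simply picks $d_1$ small enough (and $\eps_1$ small enough relative to $d$) that $h-1$ applications of the density-drop bound still leave a common neighbourhood of size $\ge d_1|S|$; regularity is applicable at every step because each intermediate set has size $\ge \eps_1|S|\ge \eps|S|$. Everything else --- the bound $|B_r|<4\eps_1|T|$ from regularity, and the product estimate for the number of good orderings --- is routine.
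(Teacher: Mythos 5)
Your proposal is essentially the paper's own argument: reveal $v_1,\dots,v_h$ one at a time, use super-regularity of $(S,T)_{G'}$ to show that at each step all but at most $3\eps_1|T|\le 4\eps_1|T|$ vertices of $T$ keep a constant fraction of the current common neighbourhood $N_{r-1}$, accept the geometric loss (a factor $d/6$ per step, so $|N_h|\ge (d/6)^h|S|$), and absorb it via the hierarchy $d_1\ll d\ll 1/|H|$, which guarantees $(d/6)^h\ge d_1$; the product count then gives $(1-4\eps_1)^{h-1}\binom{|T|}{h}$ good sets exactly as in the paper. Your closing paragraph states this correct bookkeeping, so the proof in its final form is fine.

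One caution: the intermediate ``fix'' you propose, declaring $v_r$ bad when $d_{G'}(v_r,N_{r-1})\le \delta|S|$ for a fixed $\delta$ with $d_1\ll\delta\ll d$, does not work as stated. Regularity only tells you that few vertices have degree below $(d/6)|N_{r-1}|$ \emph{relative to} $|N_{r-1}|$; to conclude that few vertices have degree below the absolute threshold $\delta|S|$ you would need $(d/6)|N_{r-1}|>\delta|S|$, i.e.\ the invariant $|N_{r-1}|\ge (6\delta/d)|S|$, and the invariant $|N_{r-1}|\ge\delta|S|$ maintained by that definition is too weak (the density of $(B_r,N_{r-1})$ is then only bounded by $\delta|S|/|N_{r-1}|\le 1$, so no contradiction with $d/6$ arises). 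So keep the relative threshold $d_{G'}(v_r,N_{r-1})\ge (d/6)|N_{r-1}|$ with the invariant $|N_r|\ge (d/6)^r|S|$, and finish with $(d/6)^h\ge d_1$ from~(\ref{hier}), which is precisely what the paper does.
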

\begin{proof}
Pick any vertex $v_1\in T$ and let $N_1:=N_{G'}(v_1)\cap S$. Since $(S,T)_{G'}$ is $(3\eps _1,d/6)$-super-regular, we have that $|N_1|\geq d|S|/6$ and every set $Y\subseteq T$ of size at least $3\eps_1 |T|$ contains at least one vertex $v_2$ with 
$|N_{G'}(v_2)\cap N_1|\geq d|N_1|/6$. 
So in particular at least $(1-4\eps_1)|T|$  vertices $v_2$ in $T$ are such that $|N_2|\geq d|N_1|/6\geq d^2|S|/36 $ where $N_2:=N_{G'}(v_2)\cap N_1$. 
Pick such a vertex $v_2$.
Similarly, there are at least $(1-4\eps_1)|T|$  vertices $v_3$ in $T$ such that $|N_{G'}(v_3)\cap N_2|\geq d|N_2|/6\geq d^3|S|/216 $.
Continuing in this way we conclude that there are at least
$$|T|\times (1-4\eps _1)^{h-1}|T|^{h-1} \times \frac{1}{h!} \geq (1-4\eps _1)^{h-1}\binom{|T|}{h}$$ 
$h$-sets  $\{v_1,\ldots,v_h\}$ in $\binom{T}{h}$ so that 
$|S\cap N_{G'}(v_1)\cap\ldots \cap N_{G'}(v_h)|\geq d^h|S|/6^h \stackrel{(\ref{hier})}{\geq} d_1 |S|$, as required.
\end{proof}

 Call a good $h$-set $\{v_1,\ldots,v_h\}\in \binom{T}{h}$ \emph{excellent} if in $G_4$ they span a copy of $H'$. 
We now claim that a.a.s.~every subset of $T$ of size at least $\eps_3|T|$ contains an excellent $h$-set.
If $H'$ consists of isolated vertices then every good set is excellent. So we may assume that $H'$ contains an edge.
Recall that $G_4\cong G_{n,p/4}$ and note that $0<d^*(H') \leq d^*(H)$. So  
$$\frac{p}{4}\geq \frac{c}{4}n^{-1/d^*(H)} \geq D \left ( \frac{n}{4Mt} \right )^{-1/d^*(H)}\geq D|T|^{-1/d^*(H')}.$$
The latter inequality holds since $|T|\geq \frac{n}{4Mt}$ by Claim~\ref{claim5}(iii) and Claim~\ref{claim1}(iii).
Let $K'_{|T|}$ be a copy of $K_{|T|}$ on vertex set $T$. By Claim~\ref{newc} there are at most $\eps _2 \binom{|T|}{h}$ copies of $H'$ in $K'_{|T|}$ whose vertex set is a bad $h$-set.
Thus, applying 
Theorem~\ref{jansonthm4} to $G_4[T]\cong G_{|T|,p/4}$,
 implies that a.a.s.~every subset of $T$ of size at least $\eps_3|T|$ contains an excellent $h$-set. 
We may similarly define a good, bad and excellent $h$-set in $\binom{S}{h}$ and 
conclude 
a.a.s.~every subset of $S$ of size at least $\eps_3|S|$ contains an excellent $h$-set.

Let $G^*:=(G'\cup G_4)[S,T]$. We say that a copy $H^*$ of $H$ in 
$G^*$ is an \emph{$S$-copy} if $H^*$ contains precisely one vertex $x$ from $T$ and $H^*- x$ is a copy of $H'$.  Similarly define a \emph{$T$-copy} of $H$ to be a copy of $H$ in $G^*$ intersecting $T$ in a copy of $H'$ and $S$ in one vertex. Set $N:= L/t$;
so $(1-2\eps_ 1)N\leq |S|,|T|\leq (1+\eps)N$ by Claim~\ref{claim5}(iii). The following two claims finish our proof of Theorem~\ref{mainthm} - the first deals with the case where $H$ is a single edge and the second deals with general $H$.
The latter claim is an analogue of  Lemma 3.1 in~\cite{triangle}, and indeed the proof is very similar; we include it for completeness. 

\begin{claim}
If $H=K_2$ is a single edge then a.a.s.~there exists a perfect $H$-tiling in $G^*$.
\end{claim}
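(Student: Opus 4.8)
The plan is to observe that, when $H=K_2$, a perfect $H$-tiling of $G^*=(G'\cup G_4)[S,T]$ is exactly a perfect matching of this bipartite graph, and to find one inside the super-regular pair $(S,T)_{G'}$ alone — so the random edges $G_4$, and with them the notion of good/excellent $h$-sets and Theorems~\ref{jansonthm1}--\ref{jansonthm4}, play no role here. Thus it suffices to show that $G'[S,T]$ has a perfect matching. Since $G^*$ is bipartite this forces $|S|=|T|$; by Claim~\ref{claim5}(iv) the integer $|S|+|T|=|S\cup T|$ is divisible by $|H|=2$, so $|S|-|T|$ is even, and by Claim~\ref{claim5}(iii) we have $\bigl||S|-|T|\bigr|\le(\eps+2\eps_1)N$.

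The first step is therefore to reduce to the balanced case $|S|=|T|$; this is the one point that needs care, since (as $G^*$ is bipartite) the two sides cannot be equalised inside $G^*$ itself. Instead we fold the equalisation into the step that produced Claim~\ref{claim5}: from the larger of $S,T$ we delete a matching of $G_3$ of size $\tfrac12\bigl||S|-|T|\bigr|$ and add its edges to $\mathcal H_3$ (equivalently to $Z''$, whose perfect $H$-tiling simply absorbs these extra edges). Such a matching exists a.a.s.: enlarging the constant $c=c(\alpha,H)$ if necessary, the graph $G_3\cong G_{n,p/4}$ (with $p/4=\Theta(n^{-1})$ for $H=K_2$) restricted to $S$ or to $T$ — a vertex set of size $\Omega(n)$ on which $G_3$ has large constant average degree — a.a.s.\ contains a matching of linear size, far exceeding the at most $\eps_1 N$ edges required. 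Since only an $O(\eps_1)$-fraction of one side is removed, $(S,T)_{G'}$ stays super-regular, say $(4\eps_1,d/7)$-super-regular, and now $|S|=|T|=:m$. (Alternatively, one may simply choose the random partitions of the $V_{0,j},V_{i,j}$ together with the almost-perfect matchings $\mathcal H_{i,j}$ so that $|S_{i,j}|=|T_{i,j}|$ from the outset.)

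It remains to verify Hall's condition for $G'[S,T]$, the standard argument for a balanced super-regular pair. For $X\subseteq S$: if $|X|\le 4\eps_1 m$ then a single vertex of $X$ already has more than $(d/7)m>|X|$ neighbours in $T$, so $|N_{G'}(X)|>|X|$; if $4\eps_1 m<|X|\le(1-4\eps_1)m$ and $|N_{G'}(X)|<|X|$, then $X$ and $T\setminus N_{G'}(X)$ are sets of size greater than $4\eps_1 m$ spanning no edge of $G'$, contradicting super-regularity of $(S,T)_{G'}$; and if $|X|>(1-4\eps_1)m$, any $v\in T\setminus N_{G'}(X)$ would have all of its more than $(d/7)m$ neighbours inside $S\setminus X$, a set of size less than $4\eps_1 m<(d/7)m$, which is impossible, so $N_{G'}(X)=T$. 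Hence Hall's condition holds, $G'[S,T]\subseteq G^*$ has a perfect matching, and this is the desired perfect $K_2$-tiling of $G^*$. The only real obstacle in the whole argument is the balancing step above; everything after it is routine and uses no random edges.
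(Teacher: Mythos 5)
Your argument is correct, and its engine is the same as the paper's: cover the (small, even) imbalance $z=\bigl||T|-|S|\bigr|$ by a matching of $z/2$ random edges inside the larger class, then finish with Hall's theorem in the now-balanced super-regular pair (the paper leaves the Hall verification as ``easy to see''; your three-case check is the standard one and is fine). The genuine difference is where that intra-class matching lives, and it comes from your strictly bipartite reading of $G^*$. The paper uses $G^*$ as the graph induced on $S\cup T$ (this is how it is used throughout the section: $T$-copies and excellent $h$-sets rely on $G_4$-edges inside $S$ and $T$), so a perfect $K_2$-tiling of $G^*$ need not respect the bipartition: the paper simply takes a matching $M$ of $z/2$ edges in $G_4[T]$ --- available a.a.s.\ because, by Theorem~\ref{jansonthm1}, every $\eta n$-subset of $T$ spans a $G_4$-edge, so $M$ can be chosen greedily --- puts $M$ itself into the tiling, and pairs $S$ with $T\setminus V(M)$ via Hall. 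You instead rebalance upstream, donating a $G_3$-matching to $\mathcal H_3/Z''$; this still yields the final perfect tiling of $G\cup G_{n,p}$, so the overall proof goes through, but it is extra bookkeeping: it amends Claim~\ref{claim5} rather than proving the claim for the given $S,T$, it re-uses the bucket $G_3$ (harmless, since finitely many a.a.s.\ properties can be intersected, but the buckets were introduced precisely to keep such uses separate), and your appeal to ``large constant average degree, enlarging $c$'' is unnecessary --- the same Theorem~\ref{jansonthm1} argument with parameter $\eta$ gives a matching of size far exceeding $z/2\le 2\eps_1 N$ without touching $c$. Finally, your parenthetical alternative (forcing $|S_{i,j}|=|T_{i,j}|$ at the partitioning stage) would need care: the parity of the number of vertices of $V'_{i,j}$ left uncovered by $\mathcal H_{i,j}$ is constrained, so one cannot always hit exact equality without also perturbing the prescribed part sizes; the matching route is the cleaner one.
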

\begin{proof}
Without loss of generality we may assume $|S|\leq |T|$. Let $z:=|T|-|S|$ and note that by Claim~\ref{claim5}(iv) we have that $z$ is even. By Theorem~\ref{jansonthm1} we know that in $G_4$ a.a.s.~every set $Q\subset T$ of size $|Q|=\eta n<|T|/2$ contains an edge. Hence we may greedily form a matching $M$ in $G_4[T]$ consisting of $z/2$ edges. Let $T':=T\setminus V(M)$ so that $|T'|=|S|$.

Since $|V(M)|\leq 3\eps_1 |T|$ and $G'[S,T]$ is $(3\eps_1,d/6)$-super-regular, we have that $G'[S,T']$ is $(\eps_2,d/8)$-super-regular.
As $\eps_2 \ll d$, it is easy to see that Hall's condition is satisfied and thus $G'[S,T']$
 contains a perfect matching $M'$. Then $M\cup M'$ forms the desired perfect $H$-tiling in $G^*$.
\end{proof}

\begin{claim}
If $|H|\geq 3$ the following statements hold a.a.s:
\begin{itemize}
\item[(a)] Provided $|S\setminus Q|+|T|+\lfloor \phi\eps_5 N \rfloor$ is divisible by $|H|$, for every $Q\subseteq S$ of size $|Q|=\phi N$ there is an $H$-tiling in $G^*$ which covers every vertex of $(S\cup T)\setminus Q$ and which covers precisely $\lfloor \phi \eps_5 N\rfloor$ vertices of $Q$. Moreover, the same assertion holds if one replaces $T$ with any subset $T'\subseteq T$  where $|T\setminus T'|\leq \eps _5 N$.
\item[(b)] $G^*$ contains a perfect $H$-tiling.
\end{itemize}
\end{claim}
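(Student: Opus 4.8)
The plan is to prove (a) first and then derive (b) from it. For part (a), I would mimic the absorbing-type argument of Lemma 3.1 in~\cite{triangle}. The idea is to cover $(S\cup T)\setminus Q$ greedily using $S$-copies and $T$-copies of $H$. Each $S$-copy consumes one vertex of $T$ and $h=|H|-1$ vertices of $S$ (forming a copy of $H'$ in $S$), and each $T$-copy consumes one vertex of $S$ and $h$ vertices of $T$. So if we use $a$ many $S$-copies and $b$ many $T$-copies to cover $S\setminus Q$ entirely and almost all of $T$, we need $ha+b=|S\setminus Q|$ (plus the $\lfloor \phi\eps_5 N\rfloor$ vertices of $Q$ we are allowed to touch, which we fold into the $S$ side) and $a+hb=|T|$ minus a small remainder. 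The divisibility hypothesis on $|S\setminus Q|+|T|+\lfloor\phi\eps_5 N\rfloor$ guarantees these linear equations have a nonnegative integer solution with a controlled small leftover in $T$ (of size less than $|H|$, or zero). I would then build the tiling iteratively: as long as the current uncovered portions $S^\circ\subseteq S$ and $T^\circ\subseteq T$ both have size at least $\eps_3 N$ (say), I can find an excellent $h$-set inside whichever of $S^\circ, T^\circ$ I currently wish to shrink — because a.a.s.\ every subset of $S$ (resp.\ $T$) of size at least $\eps_3|S|$ (resp.\ $\eps_3|T|$) contains an excellent $h$-set — and then extend that copy of $H'$ to a copy of $H$ using a single vertex on the opposite side. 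Here goodness is crucial: an excellent $h$-set in $T$ has at least $d_1|S|$ common neighbours in $S$ (in $G'$), so as long as fewer than $d_1|S|$ vertices of $S$ have been used we can always pick the extra vertex; since the total number of copies is $O(N)$ and each uses $O(1)$ vertices, and $d_1$ is a fixed constant, this is never an obstruction until we are down to the last $O(\eps_3 N)$ vertices on a side. The final cleanup — covering the last few vertices once one side has dropped below $\eps_3 N$ — is handled by switching to cover the remaining vertices of the larger side using copies of $H$ that take their single opposite-side vertex from the small leftover pool, using the super-regularity of $(S,T)_{G'}$ directly (this is where the precise bookkeeping of which side to shrink when matters, exactly as in~\cite{triangle}).

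The ``moreover'' clause, allowing $T$ to be replaced by any $T'\subseteq T$ with $|T\setminus T'|\le\eps_5 N$, follows because removing at most $\eps_5 N$ vertices from $T$ keeps $(S,T')_{G'}$ super-regular (with slightly worse parameters, absorbed by the hierarchy~(\ref{hier})) and keeps the property that every large subset of $T'$ contains an excellent $h$-set; the admissible range of the parameter $\phi\eps_5 N$ of allowed waste in $Q$ is wide enough to soak up the parity/divisibility shift caused by changing $|T'|$. I would just note that the same argument goes through verbatim with $T'$ in place of $T$.

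For part (b), I would deduce the perfect $H$-tiling from (a) by the standard two-step absorbing trick. First, set aside a random ``absorbing'' reservoir: apply (a) with a suitable $Q$ (of size $\phi N$) chosen so that, after the main tiling covers $(S\cup T)\setminus Q$ leaving $\lfloor\phi\eps_5 N\rfloor$ covered vertices inside $Q$ handled, what is left uncovered is a small set entirely inside $Q\subseteq S$ together with a small leftover in $T$; then use super-regularity of $(S,T)_{G'}$ (together with the a.a.s.\ existence of excellent $h$-sets in every large subset) to finish off this bounded remainder with a few more copies of $H$, using that $|S\cup T|$ is divisible by $|H|$ by Claim~\ref{claim5}(iv). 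Concretely: choose $Q$, run (a) to get almost everything, observe the residue is $O(\eps_5 N)$ vertices split between $S$ and $T$ with the right divisibility, and cover it directly by $S$-copies and $T$-copies — finding the copies of $H'$ via the excellent-$h$-set property and the single extra vertices via super-regularity, with enough room since the residue is $o(d_1 N)$. Finally, combine $\mathcal H_1,\mathcal H_2,\mathcal H_3$ from Claim~\ref{claim5}(i) with the perfect $H$-tilings found in each $(G'\cup G_4)[S_{i,j}\cup T_{i,j}]$ to obtain a perfect $H$-tiling of $G$, completing the proof of Theorem~\ref{mainthm}.

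The main obstacle I anticipate is the endgame bookkeeping in (a): balancing the counts of $S$-copies versus $T$-copies so that both sides run out essentially simultaneously, while respecting the divisibility constraint and the ``at most $\lfloor\phi\eps_5 N\rfloor$ vertices of $Q$'' budget, and ensuring that when one side dips below the $\eps_3 N$ threshold (where we can no longer invoke the excellent-$h$-set property) we have arranged for the other side's residue to be coverable purely from super-regularity. This is precisely the delicate part of Lemma 3.1 of~\cite{triangle}, and the interplay of the constants $\phi\ll\eps_5\ll d_1\ll d$ in~(\ref{hier}) is exactly what makes it work; I would follow that argument closely rather than reinvent it.
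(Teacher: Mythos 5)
Your part (a) follows essentially the same route as the paper (which itself adapts Lemma 3.1 of~\cite{triangle}), and deferring the endgame bookkeeping to that argument is defensible; but note the two devices that actually make that endgame work and that your sketch omits: one must first set aside the set of $T$-vertices having few neighbours in $Q$ and cover them by $S$-copies avoiding $Q$ (so that the final uncovered vertices of $T$ can be absorbed into $Q$), and one must reserve in advance a \emph{random} subset $T_2\subseteq T$ so that, by Chernoff, every vertex of $S$ has many neighbours in $T_2$; without such a reservation you cannot cover the last $\Theta(\phi\eps_5 N)$ vertices of $S$, since super-regularity says nothing about a fixed vertex's degree into a small prescribed subset of $T$.

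The genuine gap is in your deduction of (b) from (a). After applying (a) to $G^*[S\cup T']$ (with the reserved part of $T$ removed), the uncovered set is not a ``bounded remainder'' that a few extra copies can fix: it consists of roughly $(1-\eps_5)\phi N$ vertices of $Q\subseteq S$ --- and you have no control over \emph{which} vertices of $Q$ these are --- together with the reserved $T$-vertices, whose number exactly matches ($h$ of them per leftover $S$-vertex, by divisibility). Finishing requires covering this residue exactly by $T$-copies, i.e.\ perfectly matching each leftover $S$-vertex to its own $H'$-copy inside the reserved part of $T$ with full adjacency. A greedy argument via ``super-regularity plus excellent $h$-sets'' cannot achieve this: there is no slack, so the last vertices would need an $H'$-copy inside their neighbourhood intersected with a set of size $O(h)$, which nothing guarantees, and both super-regularity and the property from Theorem~\ref{jansonthm4} only concern sets of size $\Omega(\eps_3 N)$, not a given vertex's neighbours inside a prescribed set of size $O(\phi N)$. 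The paper supplies exactly the missing mechanism: it builds a large family $M$ of excellent (hence good) $H'$-copies in $G_4[T]$, forms the auxiliary bipartite graph $K$ between $S$ and $M$ (dense because good $h$-sets have at least $d_1|S|$ common neighbours), and invokes Lemma 2.6 of~\cite{triangle} to extract $X\subseteq S$ with $|X|=\phi N$ and $M'\subseteq M$ with $|M'|=(1-\eps_5)\phi N$ such that $K[X',M']$ has a perfect matching for \emph{every} $X'\subseteq X$ of size $|M'|$; it then applies (a) with $Q:=X$ and $T':=T\setminus V(M')$, so that whatever leftover $X'$ arises is matched to $M'$ and the matching is converted into $T$-copies. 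Your ``random absorbing reservoir'' must be exactly such a robust-matching template (or an equivalent absorption structure); as written, your completion step would fail.
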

\begin{proof}

For (a), let $z:=\lfloor \phi \eps_5 N \rfloor$, $t:=\lfloor z/h\rfloor$ and $z':=z-ht\in\{0,1,\ldots,h-1\}$. We will construct an $H$-tiling in $G^*$ that covers all of $(S\setminus Q)\cup T$ and precisely $z=ht+z'$ vertices of $Q$. 

Let $T_1'\subseteq T$ consist of all vertices in $T$ with fewer than $d_1|Q|$ neighbours in $Q$. Note $G'[S,T]$ is $(\eps _2,d_1)$-super-regular so we have that $|T'_1|\leq 2\eps _2 N$. Form $T_1$ by adding at most $h$ arbitrarily selected vertices from $T \setminus T_1'$ to $T_1'$ so that $|T\setminus T_1|-t$ is divisible by $|H|$. Since $G'[S,T]$ 
is $(\eps_2,d_1)$-super-regular, every vertex of $T_1$ has at least $d_1|S|-|Q|\geq \frac{d_1N}{2} > h|T_1|+2\eps_3 N$ neighbours in $S\setminus Q$. 
As every subset of $S$ of size $2\eps_3 N$ contains an excellent $h$-set, we may greedily form an $H$-tiling $\mathcal{T}_1$ of $S$-copies in $G^*$ of size $|T_1|$ which covers every vertex of $T_1$ and does not use any vertex from $Q$.

 We now select uniformly at random a subset $T_2\subseteq T\setminus T_1$ of size $|T_2|=t$. Since every vertex in $S$ has at least $d_1|T|-|T_1|\geq \frac{d_1N}{2}$ neighbours in $T\setminus T_1$, Chernoff's inequality for the hypergeometric distribution
 implies that, with probability $1-o(1)$, every vertex of $S$ has at least $\frac{\phi \eps_5 d_1}{5h}N$ neighbours in $T_2$. Fix a choice of $T_2$ for which this event occurs. Let $Q'$ be an arbitrarily selected subset of $Q$ of size $z'$, so that $|Q'|\leq h-1$ and let $S' := (S\setminus (Q\cup V(\mathcal{T}_1)))\cup Q'$ and $T' := T\setminus (T_1\cup T_2)$. Recall that, by assumption, $|S\setminus Q| + |T| + z$ 
is divisible by $|H|$, so $$|S'|+|T'|=|S\setminus Q|+z'+|T|-|T_2|-|V(\mathcal{T}_1)|=(|S\setminus Q| + |T| + z) - (|H|t + |V(\mathcal{T}_1)|)$$is divisible by $|H|$. Since $|T'|$ is divisible by $|H|$ by our selection of $T_1$ and $T_2$, it follows that $|S'|$ is divisible by $|H|$ as well.

Let $t_2:=\lfloor\frac{\phi\eps_5d_1}{10h^2}N\rfloor$, $a:=\frac{h}{(h-1)|H|}|S'|-\frac{1}{(h-1)|H|}|T'|$ and $b:=\frac{h}{(h-1)|H|}|T'|-\frac{1}{(h-1)|H|}|S'|-t_2$. Note that since $|H|\geq 3$ both $a$ and $b$ are positive. 
\begin{subclaim}
There is an $H$-tiling $\mathcal{T}_2$ in $G^*[S'\cup T']$ 
which consists of $a$ $S$-copies and $b$ $T$-copies. In particular, 
 $S'' := S'\setminus V(\mathcal{T}_2)$ and $T'' := T'\setminus V(\mathcal{T}_2)$ have sizes precisely $|S''|=|S'|-(ha+b)=t_2$ and $|T''|=|T'|-(a+hb)=ht_2$. 
\end{subclaim}
\proof
Note $G'[S',T']$ is $(2\eps_2,d_1/2)$-super-regular.
The copies of $H$ in $\mathcal T_2$ may be chosen greedily. Indeed, suppose that we have already chosen an $H$-tiling $\mathcal{T}$ in $G^*[S',T']$ consisting of at most $a$ $S$-copies and $b$ $T$-copies, then $\mathcal{T}$ covers at most $ha+b$ vertices of $S$, and at most $a+hb$ vertices of $T$. Taking $S^* := S' \setminus V(\mathcal{T})$ and $T^* := T'\setminus V(\mathcal{T})$, we find that $|S^*|\geq t_2 \geq \eps_4 N$ and $|T^*|\geq ht_2 \geq \eps _4 N$. 
Since $G'[S',T']$ is $(2\eps_2,d_1/2)$-super-regular, it follows that there is some vertex $x\in S^*$ having at least $d_1\eps_4 N/2> 2\eps_3 N$ neighbours in $T^*$. Since every set of size at least $2\eps_3 N$ in $T^*$ contains an excellent $h$-set this gives a $T$-copy  which can be added to $\mathcal{T}$. The same argument with the roles of $S^*$ and $T^*$ reversed yields instead an $S$-copy which may be added to $\mathcal{T}$.  This proves the claim.
\endproof

Since by the choice of $T_2$ each vertex of $S''$ has at least $\frac{\phi \eps_5 d_1}{5h}N>h|S''|+2\eps_3 N$ neighbours in $T_2$, we may greedily form an $H$-tiling $\mathcal{T}_3$ in $G^*[S''\cup T_2]$ consisting of $t_2$ $T$-copies which covers every vertex of $S''$ and which covers precisely $ht_2$ vertices of $T_2$. At this point we have obtained an $H$-tiling $\mathcal{T}_1\cup \mathcal{T}_2\cup\mathcal{T}_3$ in $G^*$ which covers every vertex of $S$ except for those in $Q\setminus Q'$ and every vertex of $T$ except for the precisely $ht_2$ vertices in $T''$ and the precisely $t-ht_2$ vertices in $T_2\setminus V(\mathcal{T}_3)$. Therefore, in total, precisely $t$ vertices of $T$ remain uncovered, each of which has at least $d_1|Q|-|Q'|>ht+2\eps_3 N$ neighbours in $Q\setminus Q'$ by the choice of $T_1$. We may therefore greedily form an $H$-tiling $\mathcal{T}_4$ of $S$-copies in $G^*$ which covers all the remaining uncovered vertices in $T$ and precisely $ht$ vertices of $Q\setminus Q'$. Then $\mathcal{T}_1\cup \mathcal{T}_2\cup \mathcal{T}_3\cup \mathcal{T}_4$ is the desired $H$-tiling.

The moreover part of (a) follows by observing that since $G'[S,T]$ is $(3\eps_1,d/6)$-super-regular,  $G'[S,T' ]$ is $(\eps_2,d_1)$-super-regular for any subset $T'\subseteq T$ where $|T\setminus T'|\leq \eps _5 N$. Thus, the precise argument above, with $T'$ playing the role of $T$, implies we have the desired $H$-tiling.

\smallskip

For (b), we may assume without loss of generality that $|T|\geq |S|$. Since every set of at least $2\eps_3 N$ vertices of $T$ contains an excellent $h$-set we may greedily form an $H'$-tiling $M$ of size at least $(|T|-2\eps_3 N)/h\geq N/(h+1)$ in $G_4[T]$ such that the vertex set of each of these copies of $H'$ are excellent. Fix such an $H'$-tiling $M$, and form an auxiliary bipartite graph $K$ with vertex classes $S$ and $M$ where $a\in S$ and $e=(x_1x_2\ldots x_h)\in M$ are adjacent if and only if $a\in N_{G^*}(x_1)\cap\ldots\cap N_{G^*}(x_h)$. Note that for every $H'$-copy $e=(x_1x_2\ldots x_h)\in M$ we have that 
$$d_{K}(e)=|N_{G^*}(x_1)\cap\ldots\cap N_{G^*}(x_h)\cap S|\geq d_1 |S|$$ by the definition of an excellent $h$-set, so $K$ has density at least $d_1$. We now apply the following lemma from~\cite{triangle}.

\begin{lemma}[\cite{triangle}, Lemma 2.6]
Suppose that $1/n \ll \phi ' \ll \eps ' \ll d'\ll 1/h'$.
Let $F$ be a bipartite graph with vertex classes $A$ and $B$ such that $n/h'\leq |A|,|B|\leq n$ and $d_F(A,B)\geq d'$. Then there exist subsets $X\subseteq A$ and $Y\subseteq B$ of sizes $|X|=\phi' n$ and $|Y|=(1-\eps ') \phi ' n$ such that $F[X',Y]$ contains a perfect matching for every subset $X'\subseteq X$ with $|X'|=|Y|$. 
\end{lemma}
We remark that this lemma is not precisely as stated in~\cite{triangle}, but this version can easily be deduced from the original.

Hence we may choose subsets $X\subseteq S$ and $M'\subseteq M$ such that $|X|=\phi N$, $|M'|=(1-\eps_5)\phi N$ and such that $K[X',M']$ contains a perfect matching for every subset $X'\subseteq X$ with $|X'|=|M'|$. Let $T' := T\setminus V(M')$. Then, since $\phi \ll \eps_5$  we may apply (a) to $G^*[S\cup T']$ with  $X$ playing the role of $Q$  to obtain an $H$-tiling $\mathcal{T}_1$ in $G^*$ which covers every vertex of $G^*$ except for the vertices of $V(M')$ and precisely $(1-\eps_5)\phi N$ vertices of $X$. So, taking $X'$ to be the vertices of $X$ not covered by $\mathcal{T}_1$, we have $|X'|=|M'|$. By the choice of $X$ and $M'$ it follows that $K[X',M']$ contains a perfect matching, which corresponds to a perfect $H$-tiling $\mathcal{T}_2$ in $G^*[X'\cup V(M')]$. This gives a perfect $H$-tiling $\mathcal{T}_1\cup \mathcal{T}_2$ in $G^*$.

\end{proof}

\section*{Acknowledgment}
The authors are grateful to the BRIDGE strategic alliance between the University of Birmingham and the University of Illinois at Urbana-Champaign. This research was conducted as part of the `Building Bridges in Mathematics' BRIDGE Seed Fund project.

The authors are also grateful to the referee for their careful and helpful review.

\end{document}